\def\labelitemi{--}
\def\ba #1\ea {\begin{align} #1\end{align}}
\def\bann #1\eann {\begin{align*} #1\end{align*}}
\def\ben #1\een {\begin{enumerate} #1\end{enumerate}}
\def\bi #1\ei {\begin{itemize}\renewcommand\labelitemi{--} #1\end{itemize}}
\theoremstyle{plain}
\newtheorem{thm}{Theorem}[section]
\newtheorem*{thm*}{Theorem}
\newtheorem{lem}[thm]{Lemma}
\theoremstyle{remark}
\newtheorem{rem}{Remark}
\newtheorem*{problem}{Problem}
\author[Y.~Zhao]{Yuhang Zhao}
\address{School of Mathematics, Nanjing University, Nanjing, 210093, P. R. of China}
\email{yuhangzhao@smail.nju.edu.cn}
\title{A gap theorem on closed self-shrinkers of the mean curvature flow}
\begin{document}
\begin{abstract}
		In this paper, we prove a pinching theorem for  $n-$dimensional closed  self-shrinkers of the mean curvature flow. If  the  squared norm of the second fundamental form of a closed self-shrinker of arbitrary codimension   satisfies: $
		| \vec{\uppercase\expandafter{\romannumeral2}} |^2 \le 1 +\frac{1}{10 \pi (n+2)}$, then it must be the standard sphere $S^{n}(\sqrt{n})$. This result may  provide some evidence for the open problem 13.76 in  \cite{andrews2022extrinsic}. 
	\end{abstract}
	
    \maketitle

	\section{Introduction}
	
	Let \( X : \Sigma^n \to \mathbb{R}^{n+p} \) be an \( n \)-dimensional submanifold in the \((n+p)\)-dimensional Euclidean space.
	Consider the position vector \( X(\cdot,t) \) evolving in the direction of the mean curvature \( \vec{H} \), then it gives rise
	to a solution to the mean curvature flow:
	\begin{align}
		X : \Sigma^n \times [0, T) \to \mathbb{R}^{n+p}, \  \frac{\partial X}{\partial t} = \vec{H}. 
	\end{align}
	
	It is interesting to investigate an important class of solutions to the mean curvature
	flow $(1.1)$, called self-shrinkers. An immersion  \( X : \Sigma^n \to \mathbb{R}^{n+p} \) is called a self-shrinker
	if it satisfies  the quasilinear elliptic system:
	\begin{align}
		\vec{H} = -X^\perp, \label{self-shrinker}
	\end{align}
	where \( \perp \) denotes the projection onto the normal bundle of $X( \Sigma^n )$.
	
	Self-shrinkers are shrinking homothetically under mean curvature flow (see, e.g. \cite{colding2012generic}), and they
	describe possible blow up behaviors  at a given singularity of the mean curvature flow. In the curve case, Abresch and Langer \cite{abresch1986normalized} gave a complete classification of all solutions
	to $\eqref{self-shrinker}$. These curves are called Abresch-Langer curves. In the hypersurface case, Huisken \cite{huisken1990asymptotic} showed that  closed self-shrinkers in $\mathbb{R}^{n+1}$ with nonnegative mean curvature are $S^{n}(\sqrt{n})$ . 
	Later, this was extended in \cite{huisken1993hypersurfaces} to the complete noncompact case with  nonnegative mean curvature, bounded second fundamental form and polynomial volume growth. For this result,  Colding and Minicozzi \cite{colding2012generic} showed that Huisken’s
	classification theorem still holds without the  boundedness assumption of the second fundamental form. 
    
    When the upper bound of the squared  norm  of the second fundamental form is a specific value, there is also a lot  of  progress. Le and Sesum \cite{le2011blow} proved that  complete self-shrinkers with polynomial volume growth in codimension $1$ and  $|\vec{\uppercase\expandafter{\romannumeral2}}|^2 < 1$ must be hyperplane. Later, this result was generalized by Cao and Li \cite{cao2013gap} in arbitrary codimension which stated that an $n-$dimensional complete self-shrinker with polynomial volume growth and $|\vec{\uppercase\expandafter{\romannumeral2}}|^2 \le 1$ must be a generalized cylinder. If the lower bound of the squared  norm  of the second fundamental form is  1, Ding and Xin \cite{ding2014rigidity} proved that an \( n \)-dimensional complete self-shrinker with polynomial volume growth in \( \mathbb{R}^{n+1} \)and
	$ 0 \leq |\uppercase\expandafter{\romannumeral2}|^2 - 1 \leq 0.022$  must be a round sphere or cylinder. Later, Xu and Xu \cite{lei2017chern} and  \cite{Hongwei18shrinker} imporved the constant on the right-hand side to $\frac{1}{21}$ and $\frac{1}{18}$, respectively. Under the assumption that \( |\uppercase\expandafter{\romannumeral2}|^2 \) is a constant, Cheng and Wei \cite{cheng2015gap} also proved that an \( n \)-dimensional complete self-shrinker with polynomial volume growth in \( \mathbb{R}^{n+1} \) and $ |\uppercase\expandafter{\romannumeral2}|^2  \le \frac{10}{7}$ must be one of the generalized cylinders.    
   
   Regarding other gap problems of the curvature, Cao, Xu and Zhao \cite{cao2014pinching} proved that   for complete self-shrinkers with  polynomial volume growth, if the squared norm of the tracefree 
	second fundamental form satisfies $|\vec{\uppercase\expandafter{\romannumeral2}} - \frac{\vec{H}}{n} \otimes g|^2 \le \frac{1}{2}$ and the  mean curvature is nowhere vanishing, then they must be  special generalized cylinders.

However, in the work of Ding and Xin \cite{ding2014rigidity}, Xu and Xu \cite{lei2017chern,Hongwei18shrinker},at first glance, the lower bound condition seems to be somewhat strong, as  there are many self-shrinkers  suggesting that the supremum of the squared norm of the second fundamental is strictly large than  $1$ and the infimum of the squared norm of the second fundamental is strictly smaller  than $1$, i.e. 
	\begin{align}
		\inf_{\Sigma} |\vec{\uppercase\expandafter{\romannumeral2}}|^2 < 1 < \sup_{\Sigma} |\vec{\uppercase\expandafter{\romannumeral2}}|^2. \label{eqlege}
	\end{align} 
For example, Drugan and Kleene \cite{drugan2017immersed} constructed infinitely many complete, immersed self-shrinkers with rotational symmetry	for each of the following topological types: the sphere $S^n$, the plane $\mathbb{R}^n$, the cylinder $\mathbb{R} \times S^{n-1}$ and the torus $S^1 \times S^{n-1}$. Also, many examples they constructed on topological $S^2$ satisfying $\eqref{eqlege}$ (see, e.g. the appendix of \cite{alencar2022hopf} by Silva and Zhou). In the noncompact case, $\Gamma \times \mathbb{R}^{n-1}$ also satisfies $\eqref{eqlege}$, where $\Gamma$ is an Abresch-Langer curve \cite{abresch1986normalized} that is not a circle.

	In fact, the condition `` $ 0 \leq |\uppercase\expandafter{\romannumeral2}|^2 - 1 $ '' on self -shrinkers is motivated  by $1^{\circ}$ the condition`` $0 \leq |\uppercase\expandafter{\romannumeral2}|^2 - n$ '' in Chern conjecture on  minimal hypersurfaces of $S^{n+1}$, $2^{\circ} $ the corresponding technique: calculating and integrating the laplacian of $|\nabla \uppercase\expandafter{\romannumeral2}|^2$, and $3^{\circ}$ the noncomact case: $\Gamma \times \mathbb{R}^{n-1}$ ( where $\Gamma$ is an Abresch-Langer curve \cite{abresch1986normalized} that is not a circle ). The corresponding work on Chern conjecture can be found in  \cite{peng1983minimal,yang1998chern,ding2011chern's,lei2017chern}.  Indeed, there are many compact minimal hypersurface in $S^{n+1}$ such that  $\max\limits_{\Sigma} |\uppercase\expandafter{\romannumeral2}|^2 $ is close to $n$ and $\min\limits_{\Sigma}|\uppercase\expandafter{\romannumeral2}|^2<n$, see Otsuki \cite{otsuki1970minimal}.  
	
	However, in the case of $2-$dimensional compact self-shrinkers in $\mathbb{R}^3$, there is an interesting 
	phenomenon that if its squared norm of the second fundamental form is strictly less than $\frac{3}{2}$, i.e. $ |\vec{\uppercase\expandafter{\romannumeral2}}|^2 < \frac{3}{2},$ then it must be the standard sphere $S^{2}(\sqrt{2})$ (see Lemma \ref{lemmotived}). Motivated by the result, we  naturally  consider  the following gap problem without the lower bound of the squared norm of the second fundamental form: 
    \begin{problem}
    \textit{Whether there exists a positive number $\eta $ (depending only on $n$)  such that  any $n-$dimensional compact self-shrinker in any codimension with \begin{align}
		|\vec{\uppercase\expandafter{\romannumeral2}}|^2 \le 1 +\eta 
	\end{align}
  is $S^{n}(\sqrt{n}) ?$}
  \end{problem}
  Here we point out that the compactness  assumption is necessary. In fact, by  the appendix of \cite {mantegazza2011lecture}, there  are many Abresch-Langer  curves  $\Gamma$ with curvature close to 1 so that  the squared norm of the second fundemental form of $\Gamma \times \mathbb{R}^{n-1}$ has the property that  $\max\limits_{\Sigma}|\vec{\uppercase\expandafter{\romannumeral2}}|^2 $ is close to 1.
 
 The main theorem of this paper gives  a  positive  answer to  this question :
	\begin{thm}\label{main}
		Let $X:\Sigma^{n} \to \mathbb{R}^{n+p} (n \ge 2)$ be an $n-$dimensional compact self-shrinker with the  squared norm of the second fundamental satisfying
		\begin{align}
			| \vec{\uppercase\expandafter{\romannumeral2}} |^2 \le 1 +\frac{1}{10\pi (n+2)}, \label{pinching}
		\end{align}
     then $X(\Sigma^{n})$ must be the standard sphere $S^{n}(\sqrt{n})$. 
 \end{thm}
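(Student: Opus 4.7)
My plan is to combine a weighted Simons-type identity with a Poincar\'e estimate, using the pinching hypothesis simultaneously to control the quartic Simons terms and to localise $\Sigma$ extrinsically.

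First, I would extract a diameter bound directly from the hypothesis. At any interior maximum of $|X|^2$ on the closed $\Sigma$, $\nabla|X|^2 = 2X^T$ vanishes, so at such a point $|X|^2 = |X^\perp|^2 = |\vec H|^2 \le n|\vec{\uppercase\expandafter{\romannumeral2}}|^2 \le n(1+\eta)$, where $\eta := \frac{1}{10\pi(n+2)}$. Thus $\Sigma$ lies in the closed ball of radius $\sqrt{n(1+\eta)}$ about the origin; in particular the weight $e^{-|X|^2/4}$ is uniformly comparable to a constant, and the extrinsic diameter --- hence, up to a Myers-type argument using the Gauss equation, the intrinsic diameter --- is close to that of $S^n(\sqrt n)$.

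Second, I would apply the Simons-type identity for self-shrinkers in arbitrary codimension, due essentially to Colding--Minicozzi and extended by Cao--Li,
\[
\tfrac{1}{2}\mathcal L |\vec{\uppercase\expandafter{\romannumeral2}}|^2 \;=\; |\nabla^\perp \vec{\uppercase\expandafter{\romannumeral2}}|^2 + |\vec{\uppercase\expandafter{\romannumeral2}}|^2 - Q(\vec{\uppercase\expandafter{\romannumeral2}}),
\]
where $\mathcal L = \Delta - \tfrac{1}{2}\langle X,\nabla\cdot\rangle$ is the drift Laplacian self-adjoint with respect to $e^{-|X|^2/4}\,dV$, and $Q$ is the Simons quartic term, controlled by a Li--Li-type estimate $Q \le \tfrac{3}{2}|\vec{\uppercase\expandafter{\romannumeral2}}|^4$ (with $Q = |\vec{\uppercase\expandafter{\romannumeral2}}|^4$ when $p=1$). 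Integrating against the Gaussian weight on the closed $\Sigma$ and using $\int_\Sigma \mathcal L f\, e^{-|X|^2/4}\,dV = 0$ yields
\[
\int_\Sigma |\nabla^\perp \vec{\uppercase\expandafter{\romannumeral2}}|^2\, e^{-|X|^2/4}\, dV \;\le\; \int_\Sigma |\vec{\uppercase\expandafter{\romannumeral2}}|^2\bigl(|\vec{\uppercase\expandafter{\romannumeral2}}|^2-1\bigr)\, e^{-|X|^2/4}\, dV,
\]
so $\vec{\uppercase\expandafter{\romannumeral2}}$ is nearly parallel in an integrated sense whenever $\eta$ is small.

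Third, set $u := |\vec{\uppercase\expandafter{\romannumeral2}}|^2 - 1 \le \eta$. Combining a refined Kato inequality of the form $|\nabla u|^2 \le C(n+2)\,|\vec{\uppercase\expandafter{\romannumeral2}}|^2\, |\nabla^\perp \vec{\uppercase\expandafter{\romannumeral2}}|^2$ (an ingredient familiar from the Xu--Xu Chern-conjecture arguments cited in the introduction) with a weighted Poincar\'e inequality
\[
\int_\Sigma (u - \bar u)^2\, e^{-|X|^2/4}\, dV \;\le\; \frac{\diam(\Sigma)^2}{\pi^2}\int_\Sigma |\nabla u|^2\, e^{-|X|^2/4}\, dV
\]
(Wirtinger-type, applied along a geodesic realising the diameter), with the diameter bound of Step~1 and the integrated inequality of Step~2, one is led to a circular estimate of the form $\int u^2 \le 10\pi(n+2)\,\eta \cdot \int u^2$. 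Under the strict pinching $\eta < \frac{1}{10\pi(n+2)}$ this forces $u \equiv 0$.

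Finally, with $|\vec{\uppercase\expandafter{\romannumeral2}}|^2 \equiv 1$ the integrated Simons identity compels both equality in the Li--Li bound and $\nabla^\perp \vec{\uppercase\expandafter{\romannumeral2}} \equiv 0$, so $\vec{\uppercase\expandafter{\romannumeral2}}$ is parallel with unit squared norm; the classification of closed self-shrinkers in this equality case (Cao--Li / Huisken in codimension one) then identifies $X(\Sigma)$ with the round sphere $S^n(\sqrt n)$. The main obstacle will be Step~3: the factor $\pi$ in the hypothesis points to a Wirtinger-type spectral gap and the factor $(n+2)$ matches the refined Kato constant, but lining up the various Cauchy--Schwarz and weight-comparison losses to land precisely on $\frac{1}{10\pi(n+2)}$ --- rather than a larger numerical factor --- is delicate. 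It is likely that a more judicious test function, for example $|\vec{\uppercase\expandafter{\romannumeral2}}|^2 - \tfrac{1}{n}|\vec H|^2$ rather than $u$ itself, is needed to prevent cumulative numerical losses and to exploit that the trace-free second fundamental form vanishes exactly on the sphere.
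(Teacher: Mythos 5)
Your approach is genuinely different from the paper's. The paper proves a pointwise gradient estimate for $f=|X|^2$ (Theorem \ref{maingradest}, via a maximum-principle argument on $g=|\nabla^\Sigma f|^2/(\tilde f_{\max}-f)$), then runs a Bonnet--Myers/second-variation comparison along a geodesic from the maximum point of $|X|^2$ to bound $r_0=\max d^\Sigma(p,\cdot)$, deduces pointwise lower bounds for $|\vec H|^2$ and upper bounds for $|\vec{\uppercase\expandafter{\romannumeral2}}|^2-\tfrac1n|\vec H|^2$, and finally invokes Huisken \cite{huisken1990asymptotic} ($p=1$) and Theorem~1.6 of \cite{cao2014pinching} ($p>1$). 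You instead propose an integral route: drift-Laplacian Simons identity plus a weighted Poincar\'e inequality. Your Step~1 observation that $|X|^2_{\max}=|\vec H|^2_{\max}\le n|\vec{\uppercase\expandafter{\romannumeral2}}|^2_{\max}\le n(1+\eta)$ is correct and appears in the paper (Lemma \ref{meanmax} together with \eqref{basic}), but passing from this extrinsic enclosure to an intrinsic diameter bound ``up to a Myers-type argument'' is precisely where the bulk of the work lies: one needs $\mathrm{Ric}^\Sigma$ to stay bounded below along geodesics, which requires $|\vec H|^2$ not to drop too fast away from $p$, and controlling that decay is exactly what the gradient estimate of Theorem \ref{maingradest} is for. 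Your proposal glosses over this.

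There are two concrete gaps beyond that. First, in Step~2, for codimension $p>1$ the Li--Li bound $Q\le\tfrac32|\vec{\uppercase\expandafter{\romannumeral2}}|^4$ gives only
\begin{align*}
\int_\Sigma |\nabla^\perp\vec{\uppercase\expandafter{\romannumeral2}}|^2\, e^{-|X|^2/4}\,dV \;\le\; \int_\Sigma |\vec{\uppercase\expandafter{\romannumeral2}}|^2\bigl(\tfrac32|\vec{\uppercase\expandafter{\romannumeral2}}|^2 - 1\bigr)\,e^{-|X|^2/4}\,dV ,
\end{align*}
whose right-hand side is of order $\tfrac12\cdot(\text{weighted volume})$ when $|\vec{\uppercase\expandafter{\romannumeral2}}|^2\approx 1$, not of order $\eta$; so $\nabla^\perp\vec{\uppercase\expandafter{\romannumeral2}}$ is not forced to be small in any codimension other than $1$, and the higher-codimension part of Theorem \ref{main} would not follow. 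Second, and more seriously, Step~3 does not close as stated. The weighted Poincar\'e inequality bounds $\int(u-\bar u)^2$, not $\int u^2$, and the mean $\bar u$ is not controlled: $u\equiv c$ for a small nonzero constant $c$ is consistent with $u\le\eta$, with $\int|\nabla u|^2$ small, and with $\int(u-\bar u)^2$ small, yet does not give $u\equiv 0$. The asserted ``circular estimate'' $\int u^2\le 10\pi(n+2)\eta\int u^2$ is not derived; even for $p=1$ the integrated Simons identity supplies only the one-sided relation $\int(1+u)u\,e^{-|X|^2/4}\,dV\ge 0$, which does not pin down $\bar u$. (The Wirtinger inequality along a single geodesic also controls only the restriction of $u$ to that curve, not a full integral over $\Sigma$.) Finally, the numerology you read into \eqref{pinching} is off: in the paper the $\pi$ and the $(n+2)$ come from the second-variation integral $\int_0^l\sin^2(\tfrac{\pi}{l}t)\,\mathrm{Ric}^\Sigma(\gamma',\gamma')\,dt$ in \eqref{variation 1}--\eqref{variation3}, not from a Kato constant. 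These are the points at which the delicacy you correctly anticipate actually bites; the detour through Theorem \ref{maingradest} and the comparison argument is what the paper uses to replace them.
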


    \begin{rem}
        The theorem implies that the assumption `` $ 0 \leq |\uppercase\expandafter{\romannumeral2}|^2 - 1 $ ''  can be removed in the compact case. This is a direct  improvement of \cite{cao2013gap,ding2014rigidity,lei2017chern,Hongwei18shrinker}. Moreover, our technique  is completely different from  that of \cite{ding2014rigidity,lei2017chern,Hongwei18shrinker}. In particular, their work does not include the case of higher codimension.   
        \end{rem}
   
    \begin{rem}
       If $X:\Sigma^{n} \to \mathbb{R}^{n+p} (n \ge 2)$ be an $n-$dimensional compact self-shrinker with $|X| = \text{constant}$ and   $| \vec{\uppercase\expandafter{\romannumeral2}} |^2 < \frac{5}{3}$, then $X(\Sigma^{n})$ must be $S^{n}(\sqrt{n})$. This  follows from Lemma \ref{meanmax},  Li and Li \cite{an1992intrinsic} and Simon \cite{simons1968minimal}. 
    \end{rem}

    \begin{rem}
       Colding, Ilmanen and Minicozzi \cite{colding2015rigidity} showed that
the generalized cylinders are rigid in the  sense that any self-shrinker which is
sufficiently close to one of the generalized cylinders on a large and compact set must
itself be a generalized cylinder. Using this, Guang \cite{guang2016self} proved  that any embedded self-shrinker with $p=1$ and $|\vec{\uppercase\expandafter{\romannumeral2}}|^2 \le 1 +\alpha $ must also be a generalized cylinder, where 
 $\alpha$ is an implicit positive number that depends on $n$ and the upper bound of its entropy. Lee \cite{lee2021compactness} also showed that the space of two-dimensional self-shrinkers of any codimension with the entropy strictly less than $2$ is compact, hence he got  the same gap theorem as Guang's under  this condition.
    \end{rem} 
\begin{rem}
  It is  interesting to know  here whether one can improve the gap to a constant independent of dimension  $n$ like the work of \cite{ding2014rigidity,lei2017chern,Hongwei18shrinker}. 
   \end{rem}
	To prove  Theorem \ref{main}, we first prove the following key gradient estimation on the squared norm of the  position vector. 
	
	\begin{thm}\label{maingradest}
		Let $X:\Sigma^{n} \to \mathbb{R}^{n+p}(n \ge 2)$ be an $n-$dimensional compact self-shrinker with the  squared norm of second fundamental form satisfying
		\begin{align}
			(n-1)^2|\vec{\uppercase\expandafter{\romannumeral2}}|_{\max} ^2 \leq (n+1)|X|^2_{\max} +2,\label{hypothesis}		
            \end{align}
		then we have 
		\begin{align}
			| \nabla^{\Sigma}  |X|^2 | \leq  2 B \sqrt{ |X|^2_{\max}-n} \sqrt{ |X|^2_{\max}- |X|^2},
            \end{align}
	where $$B=\sqrt{\frac{-n^2+n-2+(n+1)|X|^2_{\max} +(n-1)^2 |\vec{\uppercase\expandafter{\romannumeral2}}|_{\max}^2}{n(n-3)  +2|X|^2_{\max} }},$$
and  $| X |_{\max}=\max\limits_{\Sigma} |X|,|\vec{\uppercase\expandafter{\romannumeral2}}|_{\max}=\max\limits_{\Sigma}|\vec{\uppercase\expandafter{\romannumeral2}}|$.
\end{thm}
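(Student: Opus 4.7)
Let $f=|X|^2$ and $A=|X|^2_{\max}$, and write $\mathcal{L}=\Delta-\langle X,\nabla\cdot\rangle$ for the drift Laplacian on the self-shrinker. The plan is a classical $P$-function argument. Set
$$P = |\nabla^{\Sigma} f|^2 - 4B^2(A-n)(A-f),$$
so the estimate is equivalent to $P\le 0$ on $\Sigma$. At any point realising $f=A$ one has $\nabla f=0$ and $A-f=0$, so $P=0$ there; moreover $\Delta f=2n-2|\vec H|^2$ together with $|X|^2=|X^\perp|^2=|\vec H|^2$ at that maximum and $\Delta f\le 0$ gives $A\ge n$, so the prefactor $(A-n)$ is nonnegative. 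We argue by contradiction: assume $P$ attains a strictly positive maximum at some $p\in\Sigma$, so $\nabla P(p)=0$ and $\mathcal{L}P(p)\le 0$.

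At $p$, writing $S_{ij}:=\langle \Pi(e_i,e_j),\vec H\rangle$, the first-order condition combined with $\nabla|\nabla f|^2=2\,\Hess f(\nabla f)$ and the shrinker identity $\Hess(\tfrac12|X|^2)_{ij}=\delta_{ij}-S_{ij}$ forces $\nabla f$ to be an eigenvector of $S$ with eigenvalue $1+B^2(A-n)$. We fix an orthonormal frame with $e_1\parallel\nabla f$, so $S_{11}=1+B^2(A-n)$, $S_{1k}=0$ for $k>1$, and $\tr S=|\vec H|^2$. The second-order condition reads $\mathcal{L}|\nabla f|^2\le 8B^2(A-n)(f-n)$ after using $\mathcal{L}f=2n-2f$. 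We compute $\mathcal{L}|\nabla f|^2$ via the drift Bochner formula with potential $V=\tfrac12|X|^2$: since $\Hess V=g-S$, the Bakry--Emery Ricci combined with the Gauss equation collapses to $(\Rc+\Hess V)(\nabla f,\nabla f)=|\nabla f|^2-\sum_k|\Pi(\nabla f,e_k)|^2$, yielding
$$\mathcal{L}|\nabla f|^2 = 2|\Hess f|^2 - 2|\nabla f|^2 - 2\sum_k |\Pi(\nabla f,e_k)|^2,$$
with $|\Hess f|^2=4(n-2|\vec H|^2+|S|^2)$.

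We next substitute $|\nabla f|^2=4(f-|\vec H|^2)$, bound $\sum_k|\Pi(\nabla f,e_k)|^2\le |\vec{\uppercase\expandafter{\romannumeral2}}|^2|\nabla f|^2\le M|\nabla f|^2$ with $M=|\vec{\uppercase\expandafter{\romannumeral2}}|_{\max}^2$, and use the Cauchy--Schwarz lower bound
$$|S|^2 \;\ge\; S_{11}^2 + \frac{(\tr S - S_{11})^2}{n-1} \;=\; \bigl(1+B^2(A-n)\bigr)^2 + \frac{\bigl(|\vec H|^2-1-B^2(A-n)\bigr)^2}{n-1}.$$
The inequality $\mathcal{L}P(p)\le 0$ then reduces to a polynomial inequality in $|\vec H|^2$, with the remaining quantities $A,M,f,B^2$ as parameters; the calibrated choice
$$B^2 = \frac{-n^2+n-2+(n+1)A+(n-1)^2 M}{n(n-3)+2A}$$
is precisely what makes this inequality fail under the pinching assumption $(n-1)^2 M\le (n+1)A+2$, contradicting the existence of a positive interior maximum of $P$.

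The hardest step is this concluding algebraic reduction. The factor $(n-1)^2$ in the numerator of $B^2$ traces back to squaring $S_{11}=1+B^2(A-n)$ inside the Cauchy--Schwarz bound for $|S|^2$, while the denominator $n(n-3)+2A$ emerges from collecting the Bochner piece $8(n-2|\vec H|^2+|S|^2)$ against the $-2(1+M)|\nabla f|^2$ piece after the substitutions above. The pinching hypothesis $(n-1)^2 M\le (n+1)A+2$ is exactly what guarantees both strict positivity of that denominator -- which is already delicate for $n=2$, where $n(n-3)=-2$ -- and sign-definiteness of the resulting polynomial in $|\vec H|^2$, so that the strict inequality we need actually materialises.
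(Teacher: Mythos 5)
Your strategy is closely related to the paper's: both are maximum-principle arguments on an auxiliary ``gradient minus calibrated slope'' function. You work with $P=|\nabla f|^2-4B^2(A-n)(A-f)$ directly and the drift operator $\mathcal{L}=\Delta-\langle X,\nabla\cdot\rangle$, whereas the paper works with the quotient $g=|\nabla f|^2/(\delta f_{\max}-f)$, $\delta>1$, and the ordinary Laplacian; these are essentially interchangeable and your drift-Bochner bookkeeping is correct, as is the first-order conclusion $S_{11}:=\vec H\cdot\vec{\uppercase\expandafter{\romannumeral2}}_{11}=1+B^2(A-n)$ at the maximum of $P$.

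The gap is in the Ricci estimate and the unverified final step. You bound $\sum_k|\Pi(\nabla f,e_k)|^2\le|\vec{\uppercase\expandafter{\romannumeral2}}|^2|\nabla f|^2\le M|\nabla f|^2$, i.e.\ $R_{11}\ge S_{11}-M$. The paper instead proves, by a second Cauchy--Schwarz applied to $|\vec{\uppercase\expandafter{\romannumeral2}}|^2$ itself (their equation \eqref{ricest2}),
\begin{equation*}
R_{11}\ \ge\ \Bigl(1-\tfrac{2}{n}\Bigr)S_{11}+\tfrac{|\vec H|^2}{n}-\tfrac{n-1}{n}|\vec{\uppercase\expandafter{\romannumeral2}}|^2,
\end{equation*}
which is strictly stronger than yours at the critical point: the difference equals $\tfrac{1}{n}(M+|\vec H|^2-2S_{11})\ge\tfrac{1}{n}(2\sqrt{|\vec H|^2 M}-2S_{11})\ge0$, since $|S_{11}|\le|\vec H|\,|\vec{\uppercase\expandafter{\romannumeral2}}_{11}|\le\sqrt{|\vec H|^2 M}$. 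Crucially, the paper's bound introduces an extra term that is linear in the unknown (through $|\vec H|^2$ and $S_{11}$), which is what closes the quadratic inequality. With your cruder bound that term is absent, and the concluding claim ``the calibrated choice of $B^2$ is precisely what makes this inequality fail'' does not hold. Carrying out the algebra: writing $\beta=B^2(A-n)$, $h=|\vec H|^2(p)$, $f=f(p)$, your $\mathcal{L}P(p)\le0$ reduces (after dividing by $8$) to
\begin{equation*}
n-2h+(1+\beta)^2+\tfrac{(h-1-\beta)^2}{n-1}-(1+M)(f-h)+\beta(n-f)\ \le\ 0,
\end{equation*}
and for $|X|^2_{\max}$ close to $n$ (hence $\beta$ small), $M$ close to its forced minimum $A/n$, $h$ near $(1+\beta)^2/M$, and $f$ near $A$, the left-hand side is strictly negative (already for $n=2$ one finds it near $-1$), so no contradiction follows. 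Thus the proposal is incomplete: you need either the paper's refined estimate \eqref{ricest2} or some other device that recovers the missing term, and the reduction you assert must actually be carried out.
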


	The main idea of this theorem  is  to apply the Bochner formula to  $|\nabla^{\Sigma} |X|^2|^2$ and constructing the function $\eqref{g}$ to consider its maximum, which is inspired by P.Li's work  on the gradient estimate of eigenfunctions \cite{li2012geometric}. The form of the  estimate   we need is 
   \begin{align}
			\max_{\Sigma} g \leq C(\delta |X|^2_{\max} - n). \notag 
            \end{align} 
This is because if we have this estimate, then under the condition\eqref{pinching}, $g_{\max}$ is small enough to play an important role in the proof of the main theorem  when $\delta$ is close to 1. Of course, we also need to add the condition \eqref{hypothesis} to obtain this form. The details are provided in section \ref{sec:prelims}. 
  
   In  the  section \ref{mainproof}, we obtain a pointwise estimate of $|\vec{H}|^2$ and $|X|^2$ along  some certain geodesic by using \ref{maingradest}. Combining this with  the energy variation in the comparison  theorem, we can choose suitable upper bound on  $| \vec{\uppercase\expandafter{\romannumeral2}} |^2_{\max} -1$ to determine the upper bound of the distance function from some certain point ( where $|X|^2$  is maximal ). From this, we can obtain global estimates  of $|\vec{H}|^2$ and $|\vec{\uppercase\expandafter{\romannumeral2}} - \frac{\vec{H}}{n} \otimes g|^2 $ to prove  $X(\Sigma^{n}) = S^{n}(\sqrt{n})$  by the rigidity theorem of Huisken \cite{huisken1990asymptotic} and Theorem 1.6 of Cao, Xu, Zhao\cite{cao2014pinching}.   

{\bf Acknowledgement:} The author is grateful to professor Y.L.Shi for his help.
   \section{Preliminaries}
   Let \( X : \Sigma^n \to \mathbb{R}^{n+p} (n \ge 2) \) be an compact  connected \( n \)-dimensional submanifold of class $C^{\infty}$ in the \((n+p)\)-dimensional Euclidean space. Let $\langle, \rangle$ and $\cdot$ be the standard Euclidean metric and dot product, respectively; $\nabla^{\Sigma}$ be the covariant differentiation of the metric on $\Sigma^{n}$ induced by $X$;  $R_{ijkl}, R_{ij}$  be the components of the Riemannian curvature and the Ricci curvature in local orthonormal basic $\{e_{i}, 1\leq i\leq n\}$  respectively, and $R$ be the scalar curvature. Let $f$ be a smooth function on $\Sigma^n,$ $f_{i}, f_{ij}$ and $f_{ijk}$ be the components of the first, second and third order covariant derivatives of $f$ in local orthonormal basis $\{e_{i}, 1\leq i\leq n\},$ respectively;  $\Delta^{\Sigma} f,$ $\nabla^{\Sigma} f$ and $\nabla^{\Sigma} \nabla^{\Sigma} f$ be the laplacian, gradient  and hessian of $f$, respectively.  We use following notations: 
$$
|\nabla^{\Sigma} f|^{2}=\sum_{i}f_{i}^{2},\ \
|\nabla^{\Sigma} \nabla^{\Sigma} f|^{2}=\sum_{i,j}f_{ij}^{2}, \ \ \Delta^{\Sigma} f = \sum_{i} f_{ii}.
$$
In  a local normal basis, the Ricci identity  can be written as follows:
$$
f_{ijk}-f_{ikj}=f_{l}R_{lijk}.
$$
Then we have the following Bochner formula:
\begin{align}
			\frac{1}{2} \Delta^{\Sigma} |\nabla^{\Sigma} f |^2 = |\nabla^{\Sigma}  \nabla^{\Sigma}  f|^2 + \langle \nabla^{\Sigma} \Delta^{\Sigma} f, \nabla^{\Sigma} f\rangle + Ric(\nabla^{\Sigma} f, \nabla^{\Sigma}  f). \label{Lap2}
		\end{align}   
The Gauss equations are given by 
\begin{subequations}\label{eq:Gauss equation}
\ba
R_{i j k l}={}&\vec{\uppercase\expandafter{\romannumeral2}}_{i k} \cdot \vec{\uppercase\expandafter{\romannumeral2}}_{j l}-\vec{\uppercase\expandafter{\romannumeral2}}_{i l} \cdot \vec{\uppercase\expandafter{\romannumeral2}}_{j k} \label{eq:Riem} ,\\
R_{i k}={}&\Vec{H} \cdot \vec{\uppercase\expandafter{\romannumeral2}}_{i k} -\sum_{ j} \vec{\uppercase\expandafter{\romannumeral2}}_{i j} \cdot \vec{\uppercase\expandafter{\romannumeral2}}_{j k}\label{eq:Ric},\\
R={}&|\vec{H}|^2-|\vec{\uppercase\expandafter{\romannumeral2}}|^2. \notag
\ea
\end{subequations}
where $|\vec{\uppercase\expandafter{\romannumeral2}}|^2=\sum_{i, j} |\vec{\uppercase\expandafter{\romannumeral2}}_{i j}|^2 =\sum_{i, j} \vec{\uppercase\expandafter{\romannumeral2}}_{i j} \cdot  \vec{\uppercase\expandafter{\romannumeral2}}_{i j} $ is the squared norm of the second fundamental form, $\vec{H}=\sum_i \vec{\uppercase\expandafter{\romannumeral2}}_{i i} $ is the mean curvature vector field and $|\vec{H}|$ is the mean curvature of $\Sigma^{n}$.

In the following, we let 
\begin{align}
			f = |X|^2, \label{f}
		\end{align}
where $X$ is the position vector.  Then by a direct computation, we have
\begin{align}
    \nabla^{\Sigma} f = 2 X^{\top}, \  \ |\vec{H}|^2 = f - \frac{|\nabla^{\Sigma} f|^2}{4},  
\end{align}
\begin{align}
			\Delta^{\Sigma} f = 2n - 2|X^{\perp}|^2 = 2n-2|\vec{H}|^2 = 2n - 2f + \frac{1}{2} |\nabla^{\Sigma} f|^2, \label{lap1}
		\end{align}
		where $\top$ denotes the projection onto the tangent bundle of $\Sigma$.

As stated in the introduction, we first prove the following two lemmas to illustrate why we consider the pinching problem.
	\begin{lem}\label{lemmotived}
		Let $X:\Sigma^{2} \to \mathbb{R}^{3}$ be an $2-$dimensional compact self-shrinker with the  squared norm of the second fundamental satisfying
		\begin{align}
	|\uppercase\expandafter{\romannumeral2} |^2 < \frac{3}{2}, \label{lemcond} \end{align}
		then $X(\Sigma^{2}) = S^{2}(\sqrt{2})$. 
	\end{lem}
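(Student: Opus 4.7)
The plan is to combine self-shrinker integral identities, Gauss-Bonnet, the Li-Yau inequality, and Brendle's uniqueness theorem. First I would integrate the self-shrinker identity \eqref{lap1}, namely $\Delta^\Sigma |X|^2 = 2n - 2|\vec H|^2$, over the compact surface $\Sigma^2$ to conclude
\begin{align*}
\int_\Sigma |\vec H|^2 \, d\mu = 2\,\mathrm{Area}(\Sigma).
\end{align*}
For a 2-dimensional hypersurface in $\mathbb R^3$ the Gauss equation collapses to $R = 2K = |\vec H|^2 - |\vec{\uppercase\expandafter{\romannumeral2}}|^2$, and Gauss-Bonnet reads $\int_\Sigma K\, d\mu = 2\pi\chi(\Sigma)$. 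Combining these identities yields
\begin{align*}
\int_\Sigma |\vec{\uppercase\expandafter{\romannumeral2}}|^2\, d\mu = 2\,\mathrm{Area}(\Sigma) - 4\pi\chi(\Sigma).
\end{align*}

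Next I would exploit the hypothesis \eqref{lemcond}. Since $\Sigma$ is compact, $|\vec{\uppercase\expandafter{\romannumeral2}}|^2 < 3/2$ pointwise gives $\int_\Sigma |\vec{\uppercase\expandafter{\romannumeral2}}|^2 < (3/2)\,\mathrm{Area}(\Sigma)$, which plugged into the previous identity forces
\begin{align*}
4\pi\chi(\Sigma) > \tfrac{1}{2}\mathrm{Area}(\Sigma) > 0.
\end{align*}
Because $\Sigma$ is closed and orientable with $\chi(\Sigma) = 2 - 2g$, this forces $g = 0$ and $\chi(\Sigma) = 2$, and in particular $\mathrm{Area}(\Sigma) < 16\pi$. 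The Li-Yau inequality $\int_\Sigma |\vec H|^2/4\, d\mu \geq 4\pi m$ (where $m$ denotes the maximal multiplicity of the immersion) then yields $\mathrm{Area}(\Sigma)/2 < 8\pi$, hence $m \leq 1$, i.e.\ $\Sigma$ is embedded. Thus $\Sigma$ is a smooth, closed, embedded self-shrinker of genus $0$ in $\mathbb R^3$; by Brendle's 2016 uniqueness theorem for genus-zero self-shrinking $2$-spheres in $\mathbb R^3$, we conclude $X(\Sigma^2) = S^2(\sqrt 2)$.

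The main obstacle is the final appeal to Brendle's deep classification result. A more elementary route, via showing that $\vec H$ has a definite sign and then invoking Huisken's 1990 theorem, appears difficult under only the hypothesis \eqref{lemcond}: applying the maximum principle to the drift operator $\mathcal{L} := \Delta^\Sigma - \langle X^\top, \nabla^\Sigma \cdot\rangle$ and the identity $\mathcal{L} H = H(1 - |\vec{\uppercase\expandafter{\romannumeral2}}|^2)$ only yields $|\vec{\uppercase\expandafter{\romannumeral2}}|^2 \geq 1$ at an extremum of $H$, which is fully consistent with $|\vec{\uppercase\expandafter{\romannumeral2}}|^2 < 3/2$ and therefore does not by itself force $H$ to keep a constant sign.
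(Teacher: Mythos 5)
Your proposal is correct and follows essentially the same route as the paper: integrate the identity $\int_\Sigma |\vec H|^2 = 2\,\mathrm{Area}(\Sigma)$, compare against $\int_\Sigma |\vec H|^2 = \int_\Sigma |\vec{\mathrm{II}}|^2 + 4\pi\chi(\Sigma)$ via Gauss--Bonnet, use the pinching to force genus $0$ and $\mathrm{Area}(\Sigma) < 16\pi$, rule out non-embeddedness via Li--Yau, and conclude with Brendle's uniqueness theorem for embedded genus-zero self-shrinkers in $\mathbb R^3$. The concluding remark on why a maximum-principle/Huisken route does not directly apply is a sound observation but not part of the paper's argument.
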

	\begin{proof}
		We consider 
		\begin{align}
			\int_{\Sigma^{2}} |H|^2 = \int_{\Sigma^{2}} (| \uppercase\expandafter{\romannumeral2} |^2 + 2K) = \int_{\Sigma^{2}} | \uppercase\expandafter{\romannumeral2} |^2 + 4 \pi \chi(\Sigma^{2}), \notag
		\end{align}
		where we use Gauss Bonnet Theorem. 
        
        On the other hand, from $\eqref{self-shrinker}$ and the following equation $\eqref{lap1}$, we have
		\begin{align}
			\int_{\Sigma^{2}}  |H|^2 = 2  Area(\Sigma^{2}). \notag 
		\end{align}
		Combining with equations above and $\eqref{lemcond}$ we have
		\begin{align}
			2  Area(\Sigma^{2}) < \frac{3}{2} Area(\Sigma^{2}) + 4 \pi \chi(\Sigma^{2}). \label{area esti}
		\end{align}
		This leads to $\chi(\Sigma^{2}) = 2-2g > 0$, hence $g = 0$ and  $\chi(\Sigma^{2}) = 2 > 0$.
        
        If $\Sigma^{2}$ is not embedded, then  by Li and Yau \cite{li1982new}, we have
		\begin{align}
			\int_{\Sigma^{2}}  |H|^2 \ge 32 \pi   ,  \  Area(\Sigma^{2}) \ge 16 \pi. \notag 
		\end{align}
		However, this contradicts  with $\eqref{area esti}$. 
        
        In  the case of embedding, it follows from the work of Brendle \cite{brendle2016embedded} that a compact embedded self-shrinker in $\mathbb{R}^3$ of genus zero   must be a round sphere. 
        
        Hence we  complete the proof of this lemma. 
	\end{proof}

        \begin{lem}
            Let $X:M^{n} \to \mathbb{R}^{n+1}(n \ge 2)$ be a compact embedded self-shrinker with constant squared norm of the second fundamental form, then 
            \begin{align}
                | \uppercase\expandafter{\romannumeral2} |^2 < \frac{3}{2},
            \end{align}
            then $X(M^{n}) = S^{n}(\sqrt{n})$. 
        \end{lem}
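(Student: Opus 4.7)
The plan is to combine the Simons-type identity for codimension-one self-shrinkers with a spectral argument for the drift Laplacian $\mathcal{L}:=\Delta^{\Sigma}-\langle X,\nabla^{\Sigma}\cdot\rangle$, which is self-adjoint in the weighted $L^{2}(e^{-|X|^{2}/2}d\mu)$. In codimension one one has the standard identities
\begin{align*}
\mathcal{L}|\vec{\uppercase\expandafter{\romannumeral2}}|^{2}&=2|\nabla^{\Sigma}\vec{\uppercase\expandafter{\romannumeral2}}|^{2}+2|\vec{\uppercase\expandafter{\romannumeral2}}|^{2}\bigl(1-|\vec{\uppercase\expandafter{\romannumeral2}}|^{2}\bigr),\\
\mathcal{L}H&=H\bigl(1-|\vec{\uppercase\expandafter{\romannumeral2}}|^{2}\bigr).
\end{align*}
Writing $c:=|\vec{\uppercase\expandafter{\romannumeral2}}|^{2}$, the constancy hypothesis reduces the first identity pointwise to $|\nabla^{\Sigma}\vec{\uppercase\expandafter{\romannumeral2}}|^{2}=c(c-1)$, so $c\geq 1$, with equality if and only if $\vec{\uppercase\expandafter{\romannumeral2}}$ is parallel.

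In the boundary case $c=1$, parallelism of $\vec{\uppercase\expandafter{\romannumeral2}}$ on a compact embedded hypersurface of $\mathbb{R}^{n+1}$ forces $X(M^{n})$ to be a round sphere (compactness rules out the cylindrical and hyperplanar cases in the classification of parallel Euclidean hypersurfaces), and the self-shrinker equation $\vec{H}=-X^{\perp}$ pins the radius at $\sqrt{n}$, giving $X(M^{n})=S^{n}(\sqrt{n})$. For the remaining range $1<c<3/2$, the second identity becomes $\mathcal{L}H=(1-c)H$, so $H$ is an $\mathcal{L}$-eigenfunction with eigenvalue $1-c\in(-1/2,0)$. Because $M$ is closed and cannot be minimal in Euclidean space, $H\not\equiv 0$; integrating $\mathcal{L}H$ against the Gaussian weight and using self-adjointness gives $\int_{M}He^{-|X|^{2}/2}d\mu=0$ (as $c\neq 1$), so $H$ changes sign on $M$. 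Integrating $\mathcal{L}(H^{2})=2(1-c)H^{2}+2|\nabla^{\Sigma}H|^{2}$ similarly yields
\begin{align*}
\int_{M}|\nabla^{\Sigma}H|^{2}\,e^{-|X|^{2}/2}d\mu=(c-1)\int_{M}H^{2}\,e^{-|X|^{2}/2}d\mu.
\end{align*}

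To extract a numerical contradiction with $c<3/2$, I would use that the ambient coordinate functions $x_{\alpha}$ restricted to $M$ satisfy $\mathcal{L}x_{\alpha}=-x_{\alpha}$; since $c\neq 2$, the $\mathcal{L}$-eigenfunction $H$ is weighted-$L^{2}$-orthogonal to constants and to every $x_{\alpha}$. A Poincar\'{e}-type inequality for $\mathcal{L}$ on the weighted $L^{2}$-complement of $\mathrm{span}\{1,x_{1},\ldots,x_{n+1}\}$, interpreted as a lower bound on the next nontrivial eigenvalue of $-\mathcal{L}$, would then force $c-1\geq 1/2$, contradicting the pinching. As a cross-check one may invoke Cheng--Wei's rigidity \cite{cheng2015gap} to handle the subrange $c\leq 10/7$ (whose compact members are already $S^{n}(\sqrt{n})$), and close the residual range $(10/7,3/2)$ by a refined Bochner computation of $\mathcal{L}|\nabla^{\Sigma}\vec{\uppercase\expandafter{\romannumeral2}}|^{2}$ combined with the Kato-type bound $|\nabla^{\Sigma}H|^{2}\leq n|\nabla^{\Sigma}\vec{\uppercase\expandafter{\romannumeral2}}|^{2}$. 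The principal obstacle is precisely this spectral/integral step: producing an eigenvalue estimate for $-\mathcal{L}$ on a general compact embedded self-shrinker that is sharp enough to rule out $c\in(1,3/2)$ uniformly, with embeddedness entering exactly at this stage to exclude pathological configurations.
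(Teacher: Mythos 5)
Your setup is exactly the paper's: use the codimension-one drift-heat equation $\mathcal{L}H=(1-|\uppercase\expandafter{\romannumeral2}|^{2})H$, observe that constancy of $|\uppercase\expandafter{\romannumeral2}|^{2}$ turns $H$ into an $\mathcal{L}$-eigenfunction with eigenvalue $|\uppercase\expandafter{\romannumeral2}|^{2}-1$, and then appeal to a spectral lower bound to force $|\uppercase\expandafter{\romannumeral2}|^{2}-1\geq 1/2$, contradicting the pinching. But the step you flag as ``the principal obstacle'' is in fact the entire content of the lemma, and the paper resolves it by a single citation: Ding--Xin (\cite{ding2013volume}, Theorem 1.3), which gives precisely the eigenvalue gap $\geq 1/2$ for non-constant eigenfunctions of the drift Laplacian on a self-shrinker. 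Without that input, your argument does not close. The alternatives you sketch do not plug the hole either: Cheng--Wei \cite{cheng2015gap} handles $|\uppercase\expandafter{\romannumeral2}|^{2}\leq 10/7$ but leaves the interval $(10/7,3/2)$ open, and the ``refined Bochner computation'' with a Kato bound is unsupported speculation. Your Poincar\'e heuristic is also off: orthogonality to $\{1,x_{1},\ldots,x_{n+1}\}$ (eigenvalues $0$ and $1$ for $-\mathcal{L}$) would, if it bought anything, give a bound by the \emph{third} distinct eigenvalue, which you have no reason to believe is exactly $1/2$ on a general compact self-shrinker; the correct statement is the Ding--Xin bound on the first nonzero eigenvalue, and one does not even need to use the coordinate functions.

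Two further remarks. First, you handle the degenerate case via ``$c=1\Rightarrow\vec{\uppercase\expandafter{\romannumeral2}}$ parallel $\Rightarrow$ round sphere''; the paper instead splits on whether $H$ is constant and invokes Aleksandrov's theorem for embedded closed CMC hypersurfaces. Both are fine, but note that Aleksandrov is where the paper uses embeddedness, whereas you locate the use of embeddedness inside the (unproved) spectral estimate --- this is not where it belongs. Second, you deduce $c\geq 1$ from the Simons identity, which is a correct and slightly cleaner way to eliminate $c<1$ a priori; the paper's proof obtains the same conclusion implicitly from the eigenvalue bound. In short, you reconstructed the skeleton of the argument but did not find the load-bearing theorem, and the workarounds you propose in its place do not succeed.
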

        \begin{proof}
       We perform the  computation directly by the definition \eqref{self-shrinker} (or we  directly refer to  the  equation at the bottom of page 428 in \cite{andrews2022extrinsic}, 
            \begin{align}
                (\Delta - \langle X, \nabla \cdot \ \rangle ) H = \Delta H - \langle X, \nabla H \rangle =  -(| \uppercase\expandafter{\romannumeral2} |^2 - 1) H.\notag 
            \end{align}
         
         If $H$ is a constant, by Aleksandrov theorem, we have $X(M) = S^{n}(\sqrt{n})$. 
            
 If $H$ is not a constant, by  our assumption that $| \uppercase\expandafter{\romannumeral2} |^2$ is constant, we know $H$ is a eigenfunction of the operator $-\Delta + \langle X, \nabla \cdot \ \rangle $  with the eigenvalue $| \uppercase\expandafter{\romannumeral2} |^2 - 1$.
            
            Hence        
            by the work of Ding and Xin (\cite{ding2013volume}, Theorem 1.3), we have
            \begin{align}
              | \uppercase\expandafter{\romannumeral2} |^2 - 1 \ge \frac{1}{2}.
            \end{align}
            This is contradiction to our assumption that $| \uppercase\expandafter{\romannumeral2} |^2 < \frac{3}{2}.$
        \end{proof}

        \begin{rem}
         This result improves Cheng and Wei's result \cite{cheng2015gap} in the case of compact embedding. 
        \end{rem}
         
	\section{Proof of Theorem \ref{maingradest}}\label{sec:prelims}
 
Before proving Theorem \ref{maingradest}, we need  the following several lemmas:
 
 \begin{lem} \label{meanmax}
		Let $X:\Sigma^{n} \to \mathbb{R}^{n+p} (n \ge 2)$ be an $n-$dimensional compact self-shrinker, 
		then we have 
		\begin{align}
			|X|_{\max} = |\vec{H}|_{\max} \ge \sqrt{n},
		\end{align}
		and if the equality holds, then $\Sigma^{n}$ is a minimal hypersurface in $S^{n+p-1}(\sqrt{n})$.
	\end{lem}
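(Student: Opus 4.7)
The plan is to exploit the self-shrinker identity $\vec{H}=-X^\perp$ together with the Laplacian formula \eqref{lap1} and the maximum principle. First I would observe the pointwise inequality $|\vec{H}|=|X^\perp|\le |X|$, valid everywhere on $\Sigma^{n}$, so that $|\vec{H}|_{\max}\le |X|_{\max}$. For the reverse inequality, choose a point $p_0\in\Sigma^{n}$ where $|X|^{2}$ attains its maximum; since $\Sigma$ is compact and $|X|^{2}$ is smooth, $\nabla^{\Sigma}|X|^{2}(p_0)=2X^{\top}(p_0)=0$, which forces $X(p_0)=X^{\perp}(p_0)$, and therefore $|X|_{\max}=|X|(p_0)=|X^{\perp}|(p_0)=|\vec{H}|(p_0)\le |\vec{H}|_{\max}$. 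Combining the two directions gives $|X|_{\max}=|\vec{H}|_{\max}$.

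To obtain the lower bound $\sqrt{n}$, I would apply the classical maximum principle to the identity $\Delta^{\Sigma}|X|^{2}=2n-2|\vec{H}|^{2}$ from \eqref{lap1}. At the same maximum point $p_0$ one has $\Delta^{\Sigma}|X|^{2}(p_0)\le 0$, hence $|\vec{H}|^{2}(p_0)\ge n$, and the previous step identifies $|\vec{H}|^{2}(p_0)=|X|_{\max}^{2}$. This yields $|X|_{\max}\ge\sqrt{n}$. There is no real obstacle here; the only subtlety is keeping the two identities $\vec{H}=-X^{\perp}$ and $\Delta^{\Sigma}|X|^{2}=2n-2|\vec{H}|^{2}$ straight.

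For the equality case $|X|_{\max}=\sqrt{n}$, combine the two inequalities $|\vec{H}|\le|X|\le\sqrt{n}$ into $\Delta^{\Sigma}|X|^{2}=2n-2|\vec{H}|^{2}\ge 2n-2|X|^{2}\ge 0$ on $\Sigma^{n}$. Since $\Sigma^{n}$ is compact without boundary, integration (or the strong maximum principle) forces $\Delta^{\Sigma}|X|^{2}\equiv 0$ and $|X|^{2}\equiv n$. Consequently $X^{\top}=\tfrac{1}{2}\nabla^{\Sigma}|X|^{2}\equiv 0$, so $X=X^{\perp}$ everywhere; this simultaneously shows $X(\Sigma^{n})\subset S^{n+p-1}(\sqrt{n})$ and gives $\vec{H}=-X$. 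Decomposing the second fundamental form of the two-step embedding $\Sigma^{n}\subset S^{n+p-1}(\sqrt{n})\subset\mathbb{R}^{n+p}$, one has $\vec{\uppercase\expandafter{\romannumeral2}}^{\Sigma\subset\mathbb{R}^{n+p}}(u,v)=\vec{\uppercase\expandafter{\romannumeral2}}^{\Sigma\subset S}(u,v)-\tfrac{\langle u,v\rangle}{n}X$ for tangent $u,v$ (since $X$ is normal to $S^{n+p-1}(\sqrt{n})$, hence normal to $\Sigma$). Tracing gives $\vec{H}^{\Sigma\subset\mathbb{R}^{n+p}}=\vec{H}^{\Sigma\subset S}-X$, and since $\vec{H}^{\Sigma\subset\mathbb{R}^{n+p}}=-X$, we conclude $\vec{H}^{\Sigma\subset S}=0$, i.e.\ $\Sigma^{n}$ is minimal in $S^{n+p-1}(\sqrt{n})$. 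The mildly technical step is this last decomposition of the second fundamental form; everything else is a one-line application of the maximum principle.
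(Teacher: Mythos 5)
Your proposal is correct and follows essentially the same route as the paper: both use that $X^{\top}=0$ at a maximum point of $|X|^{2}$ to identify $|X|_{\max}=|\vec{H}|_{\max}$, both exploit the identity $\Delta^{\Sigma}|X|^{2}=2n-2|\vec{H}|^{2}$ (you via the pointwise sign of $\Delta^{\Sigma}|X|^{2}$ at the maximum, the paper via integration over $\Sigma$, which are interchangeable here), and both close the equality case by forcing $|X|^{2}\equiv n$ and reading off minimality in $S^{n+p-1}(\sqrt{n})$ (you via the chain decomposition of $\vec{\uppercase\expandafter{\romannumeral2}}$, the paper via the equivalent characterization $\Delta^{\Sigma}X=-X$).
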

	\begin{proof}
		We choose a point $p \in \Sigma^{n}$ such that 
		\[
		|X|_{p} = |X|_{\max} .
		\] 
		Then by the first variation of the length and $\eqref{self-shrinker}$ we have
		\[
		|\vec{H}|(p) = |X|_{\max}.
		\]
		Combining with the fact that	$|\vec{H}| \le |X|,$
		we have 
		\[ |X|_{\max} = 	|\vec{H}|_{\max}. \]
		Integrating $\eqref{lap1}$ on $\Sigma^{n}$, we have 
		\[\int_{\Sigma^{n}} (2n - 2|X^{\perp}|^2) = \int_{\Sigma^{n}} (2n - 2|\vec{H}|^2)=0. \]
		Then 
		\[ |X|_{\max} = |\vec{H}|_{\max} \ge \sqrt{n}, \]
		If the equality holds, then we obtain $|H|^2 = |X|^2= n $. Thus, by  $\eqref{self-shrinker}$, we can conclude that 
		 \(\Delta^{\Sigma} X=-X \), i.e.
        \( M \) is a minimal submanifold in the sphere \( S^{n+p-1}(\sqrt{n}) \).
        
       Hence we prove this lemma. 
	\end{proof}
	
	In the following, we assume that $|X^{\top}|_{\max} >0$ and $ |X|_{\max}>\sqrt{n}$, otherwise Theorem \ref{maingradest}  follows immediately.
	\begin{lem} \label{ineqf}
		Let $X:\Sigma^{n} \to \mathbb{R}^{n+p}$ be an $n-$dimensional complete self-shrinker, then we have 
		\begin{align}
			\frac{1}{2} \Delta^{\Sigma} |\nabla^{\Sigma} f |^2 & \ge \frac{n}{n-1} |\nabla^{\Sigma} |\nabla^{\Sigma} f \|^2 + \frac{[2(n-f) + \frac{1}{2} |\nabla^{\Sigma} f |^2]^2}{n-1} \notag \\ & - \frac{1}{n-1}[2(n-f) + \frac{1}{2}|\nabla^{\Sigma} f|^2] \frac{\langle \nabla^{\Sigma} |\nabla^{\Sigma} f|^2, \nabla^{\Sigma} f \rangle }{| \nabla^{\Sigma} f |^2} \notag \\ & +  \frac{1}{2} \langle \nabla^{\Sigma} |\nabla^{\Sigma} f|^2, \nabla^{\Sigma} f \rangle - 2 | \nabla^{\Sigma} f |^2 \notag \\ & + [(1-\frac{2}{n}) \vec{H} \cdot \vec{\uppercase\expandafter{\romannumeral2}}_{11} - \frac{n-1}{n} |\vec{\uppercase\expandafter{\romannumeral2}}|^2 + \frac{|\vec{H}|^2}{n}] | \nabla^{\Sigma} f|^2. \label{ief}
		\end{align}
		where $|\nabla^{\Sigma} f| \neq 0$. 
	\end{lem}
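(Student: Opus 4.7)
The plan is to apply the Bochner formula \eqref{Lap2} to $f = |X|^2$ and bound each of the three resulting terms from below in a form that recombines algebraically into \eqref{ief}. Using $\Delta^{\Sigma} f = 2(n-f) + \tfrac{1}{2}|\nabla^{\Sigma} f|^2$ from \eqref{lap1}, I first compute $\nabla^{\Sigma} \Delta^{\Sigma} f = -2\,\nabla^{\Sigma} f + \tfrac{1}{2}\,\nabla^{\Sigma} |\nabla^{\Sigma} f|^2$, which immediately yields
\[
\langle \nabla^{\Sigma} \Delta^{\Sigma} f, \nabla^{\Sigma} f\rangle = -2|\nabla^{\Sigma} f|^2 + \tfrac{1}{2}\langle \nabla^{\Sigma} |\nabla^{\Sigma} f|^2, \nabla^{\Sigma} f\rangle,
\]
reproducing the third line of \eqref{ief}.

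For the Hessian contribution, at a point where $|\nabla^{\Sigma} f| \neq 0$ I work in a local orthonormal frame with $e_1 = \nabla^{\Sigma} f / |\nabla^{\Sigma} f|$, so $f_1 = |\nabla^{\Sigma} f|$ and $f_i = 0$ for $i \geq 2$. A direct calculation gives the identities
\[
f_{11} = \frac{\langle \nabla^{\Sigma} |\nabla^{\Sigma} f|^2, \nabla^{\Sigma} f\rangle}{2|\nabla^{\Sigma} f|^2}, \qquad |\nabla^{\Sigma} |\nabla^{\Sigma} f||^2 = \sum_{j} f_{1j}^2.
\]
Splitting $|\nabla^{\Sigma} \nabla^{\Sigma} f|^2 = f_{11}^2 + 2\sum_{j\geq 2} f_{1j}^2 + \sum_{i,j\geq 2} f_{ij}^2$ and applying Cauchy--Schwarz to $\sum_{i \geq 2} f_{ii} = \Delta^{\Sigma} f - f_{11}$ gives the lower bound $f_{11}^2 + 2\sum_{j\geq 2} f_{1j}^2 + (n-1)^{-1}(\Delta^{\Sigma} f - f_{11})^2$. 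Since $2 \geq n/(n-1)$ for $n \geq 2$, I replace the coefficient $2$ by $n/(n-1)$; after this the $f_{11}^2$ contributions cancel exactly, and substituting the expressions for $f_{11}$ and $\Delta^{\Sigma} f$ produces precisely the first two lines of \eqref{ief}.

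For the Ricci term I use $Ric(\nabla^{\Sigma} f, \nabla^{\Sigma} f) = R_{11}|\nabla^{\Sigma} f|^2$ and the Gauss equation \eqref{eq:Ric} to write $R_{11} = \vec{H}\cdot \vec{\text{II}}_{11} - \sum_j |\vec{\text{II}}_{1j}|^2$. To reach the last line of \eqref{ief} I need the algebraic inequality
\[
(n-1)|\vec{\text{II}}|^2 + 2\,\vec{H}\cdot \vec{\text{II}}_{11} - |\vec{H}|^2 \geq n \sum_j |\vec{\text{II}}_{1j}|^2.
\]
Setting $B_i = \vec{\text{II}}_{ii}$ and $B_{ij} = \vec{\text{II}}_{ij}$ for $i < j$, a direct expansion shows the difference of the two sides equals
\[
\sum_{2 \leq i<j} |B_i - B_j|^2 + (n-2)\sum_{j\geq 2}|B_{1j}|^2 + 2(n-1)\sum_{2 \leq i<j}|B_{ij}|^2,
\]
which is manifestly nonnegative for $n \geq 2$. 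Dividing by $n$ and combining with the identity for $R_{11}$ gives the desired lower bound.

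I expect the algebraic identity in the Ricci step to be the main obstacle, since the precise sum-of-squares grouping has to be found by hand; it is essentially a Simons/Kato-type identity adapted to the frame aligned with $\nabla^{\Sigma} f$. The Hessian bound is a standard Cauchy--Schwarz Bochner-type manipulation, and the Laplacian calculation is immediate from \eqref{lap1}; assembling the three estimates then yields \eqref{ief}.
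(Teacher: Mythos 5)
Your proof is correct and follows essentially the same route as the paper. The Bochner setup, the frame with $e_1 = \nabla^{\Sigma} f / |\nabla^{\Sigma} f|$, the identity $f_{11} = \langle \nabla^{\Sigma} |\nabla^{\Sigma} f|^2, \nabla^{\Sigma} f\rangle / (2|\nabla^{\Sigma} f|^2)$, the Hessian lower bound via Cauchy--Schwarz together with $2 \ge n/(n-1)$, and the Ricci step all agree with the paper's argument. The only difference of presentation is in how you verify the Ricci estimate \eqref{ricest2}: you exhibit the explicit sum-of-squares remainder, whereas the paper simply repeats the Hessian computation verbatim with $f_{ij}$ replaced by $\vec{\uppercase\expandafter{\romannumeral2}}_{ij}$ and $\Delta^{\Sigma} f$ by $\vec{H}$, that is, it drops $\sum_{2\le i<j}|\vec{\uppercase\expandafter{\romannumeral2}}_{ij}|^2$, applies Cauchy--Schwarz to $\vec H - \vec{\uppercase\expandafter{\romannumeral2}}_{11} = \sum_{i\ge 2}\vec{\uppercase\expandafter{\romannumeral2}}_{ii}$, and uses $2\ge n/(n-1)$. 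Your sum-of-squares remainder, multiplied by $n-1$, is precisely the total of what those three steps discard, so the two derivations are algebraically identical; the step you flagged as the likely obstacle is in fact the routine mirror of the Hessian bound and needs no extra work.
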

	\begin{proof}
		 At the point $q^{\prime}$, if  $|\nabla^{\Sigma} f|(q^{\prime}) \neq 0$, then  we can  choose an orthonormal  frame        
 $ e_{1},e_{2},\cdots, e_{n}$ such that 
		\[ e_{1}(q^{\prime}) = \frac{\nabla^{\Sigma} f}{|\nabla^{\Sigma} f|}(q^{\prime}) \]
		Then 
		\[
		|\nabla^{\Sigma} |\nabla^{\Sigma} f|^2|^2 = 4 \sum^{n}_{j=1}(\sum^{n}_{i=1}f_{i}f_{ij})^2 =  4 |\nabla^{\Sigma} f|^2 (f^2_{11} + f^2_{12} + \cdots + f^2_{1n}).
		\]
		On the other hand,
		\[  |\nabla^{\Sigma} |\nabla^{\Sigma} f|^2|^2  = 4 |\nabla^{\Sigma} f|^2 |\nabla^{\Sigma} |\nabla^{\Sigma} f||^2.\]
		This implies
		\[  
		|\nabla^{\Sigma}|\nabla^{\Sigma} f||^2 = (f^2_{11} + f^2_{12} + \cdots + f^2_{1n}).
		\]
		Then
		\begin{align}
			|\nabla^{\Sigma} \nabla^{\Sigma} f|^2 & \ge f^2_{11} + f^2_{22} + \cdots + f^2_{nn} + 2(f^2_{12} + f^2_{13} + \cdots + f^2_{1n}) \notag \\ & \ge \frac{(\Delta^{\Sigma} f - f_{11})^2}{n-1} + f^2_{11} + 2(f^2_{12} + f^2_{13} + \cdots + f^2_{1n}) \notag \\ & \ge \frac{n}{n-1} (f^{2}_{11} + f^2_{12} + \cdots + f^2_{1n}) + \frac{|\Delta^{\Sigma} f|^2}{n-1} - \frac{2}{n-1} \Delta^{\Sigma} f \cdot f_{11}  \notag \\ & = \frac{n}{n-1} |\nabla^{\Sigma}|\nabla^{\Sigma} f||^2 + \frac{|\Delta^{\Sigma} f|^2}{n-1} - \frac{2}{n-1} \Delta^{\Sigma} f \cdot f_{11}. \label{hessest}
		\end{align}
		Since 
		\[
		f_{11} = \frac{1}{|\nabla^{\Sigma} f|^2} \langle \nabla^{\Sigma}_{ \nabla^{\Sigma} f} \nabla^{\Sigma} f, \nabla^{\Sigma} f \rangle = \frac{\langle \nabla^{\Sigma} f, \nabla^{\Sigma} |\nabla^{\Sigma} f|^2 \rangle}{2 |\nabla^{\Sigma} f|^2},
		\]
		we have
		\begin{align}
			|\nabla^{\Sigma} \nabla^{\Sigma} f|^2 \ge  \frac{n}{n-1} |\nabla^{\Sigma}|\nabla^{\Sigma} f||^2 + \frac{|\Delta^{\Sigma} f|^2}{n-1} - \frac{\Delta^{\Sigma} f \cdot \langle \nabla^{\Sigma} f, \nabla^{\Sigma} |\nabla^{\Sigma} f|^2 \rangle}{(n-1)|\nabla^{\Sigma} f|^2}. 
		\end{align}
		By Gauss equation $\eqref{eq:Ric}$, we have
		\begin{align}
			Ric^{\Sigma}_{11} = \vec{H} \cdot \vec{\uppercase\expandafter{\romannumeral2}}_{11} - \sum_{i=1}^{n} \vec{\uppercase\expandafter{\romannumeral2}}_{1i}^2. \notag 
		\end{align}
		Hence
		\begin{align}
			|\vec{\uppercase\expandafter{\romannumeral2}}|^2 & \ge \vec{\uppercase\expandafter{\romannumeral2}}^2_{11} + \vec{\uppercase\expandafter{\romannumeral2}}^2_{22} + \cdots + \vec{\uppercase\expandafter{\romannumeral2}}^2_{nn} + 2(\vec{\uppercase\expandafter{\romannumeral2}}^2_{12} + \vec{\uppercase\expandafter{\romannumeral2}}^2_{13} + \cdots + \vec{\uppercase\expandafter{\romannumeral2}}^2_{1n}) \notag \\ & \ge \frac{(\vec{H} - \vec{\uppercase\expandafter{\romannumeral2}}_{11})^2}{n-1} + \vec{\uppercase\expandafter{\romannumeral2}}^2_{11} + 2(\vec{\uppercase\expandafter{\romannumeral2}}^2_{12} + \vec{\uppercase\expandafter{\romannumeral2}}^2_{13} + \cdots + \vec{\uppercase\expandafter{\romannumeral2}}^2_{1n}) \notag \\ & \ge \frac{n}{n-1} (\vec{\uppercase\expandafter{\romannumeral2}}^{2}_{11} + \vec{\uppercase\expandafter{\romannumeral2}}^2_{12} + \cdots + \vec{\uppercase\expandafter{\romannumeral2}}^2_{1n}) + \frac{|\vec{H}|^2}{n-1} - \frac{2}{n-1} \vec{H} \cdot \vec{\uppercase\expandafter{\romannumeral2}}_{11}  \notag \\ & = \frac{n}{n-1} \sum_{i=1}^{n} \vec{\uppercase\expandafter{\romannumeral2}}_{1i}^2  + \frac{|\vec{H}|^2}{n-1} - \frac{2}{n-1} \vec{H} \cdot \vec{\uppercase\expandafter{\romannumeral2}}_{11} \notag \\ & =  -\frac{n}{n-1} Ric^{\Sigma}_{11} + \frac{n - 2}{n-1}  \vec{H} \cdot \vec{\uppercase\expandafter{\romannumeral2}}_{11}  +  \frac{|\vec{H}|^2}{n-1},\notag
		\end{align}
		that is,
		\begin{align}
			Ric^{\Sigma}_{11}  \ge (1 - \frac{2}{n})   \vec{H} \cdot \vec{\uppercase\expandafter{\romannumeral2}}_{11} + \frac{|\vec{H}|^2}{n} - \frac{n-1}{n} | \vec{\uppercase\expandafter{\romannumeral2}} |^2. \label{ricest2}
		\end{align}
		Then the conclusion  follows from $\eqref{lap1},\eqref{Lap2},\eqref{hessest}$ and $\eqref{ricest2}$.
	\end{proof}
	
	For any $\delta > 1$, let
	\begin{align}
		\tilde{f}_{\max} = \delta f_{\max} \in  (\delta \sqrt{n},\delta\vec{\uppercase\expandafter{\romannumeral2}} |_{\max}^2 ]\notag ,
	\end{align}
	and 
	\begin{align}
		g = \frac{|\nabla^{\Sigma} f|^2}{\tilde{f}_{\max} - f}.  \label{g}
	\end{align}
	Since $\Sigma^{n}$ is compact, we can choose the point $x_{0}$ where it attains its maximum, and we denote this maximum by $a$ i.e.
	\begin{align}
		a = g_{\max} = g(x_{0})>0.
	\end{align} 
	Then at  this point, we have 
	\begin{lem}\label{ineqquad}
		At  the maximum point of $g$ above, we have
		\begin{align}
			0  \geq &   [\frac{1}{4}(\tilde{f}_{\max} -f)^2 + \frac{1}{2} (\tilde{f}_{\max} -f) + \frac{n^2}{4}  ]a^2 \notag \\ + & \{ (n+1)(\tilde{f}_{\max} -f)^2 + n(n+1)(n - \tilde{f}_{\max}) \notag \\ + & [2(n^2+1) -(n+1)\tilde{f}_{\max} -  (n-1)^2 |\vec{\uppercase\expandafter{\romannumeral2}}|^2 ](\tilde{f}_{\max} -f)  \} a \notag \\ + & 4n (f-n)^2. \label{eqquada}
		\end{align}
	\end{lem}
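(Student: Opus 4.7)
The plan is to apply the second derivative test to $g$ at an interior maximum $x_{0}$. Since $\Sigma^{n}$ is compact and $a>0$ forces $|\nabla^{\Sigma} f|(x_{0})>0$, $g$ is smooth at $x_{0}$, and we have $\nabla^{\Sigma} g(x_{0})=0$ and $\Delta^{\Sigma} g(x_{0})\leq 0$. Writing $u=|\nabla^{\Sigma} f|^{2}$ and $v=\tilde{f}_{\max}-f$, the first relation rearranges to $\nabla^{\Sigma} u=-g\,\nabla^{\Sigma} f=-a\,\nabla^{\Sigma} f$ at $x_{0}$, from which I read off the two pieces of data
\[
\langle\nabla^{\Sigma} u,\nabla^{\Sigma} f\rangle\big|_{x_{0}}=-a\,u,\qquad |\nabla^{\Sigma}|\nabla^{\Sigma} f||^{2}\big|_{x_{0}}=\frac{|\nabla^{\Sigma} u|^{2}}{4u}=\frac{a^{2}}{4}.
\]
Expanding $\Delta^{\Sigma}(u/v)$ by the quotient rule produces two first-order pieces $\tfrac{2\langle\nabla^{\Sigma} u,\nabla^{\Sigma} f\rangle}{v^{2}}+\tfrac{2u^{2}}{v^{3}}$, which cancel exactly at $x_{0}$ because $u=av$ there. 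What survives is $\Delta^{\Sigma} g|_{x_{0}}=\tfrac{\Delta^{\Sigma} u}{v}+\tfrac{u\,\Delta^{\Sigma} f}{v^{2}}$, so $\Delta^{\Sigma} g(x_{0})\leq 0$ collapses to $\Delta^{\Sigma} u\leq -a\,\Delta^{\Sigma} f$ at $x_{0}$.

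I then pair this upper bound with the Bochner lower bound for $\tfrac{1}{2}\Delta^{\Sigma} u$ provided by Lemma~\ref{ineqf}, substituting the two displayed data together with $\Delta^{\Sigma} f=2(n-f)+u/2$ and $|\vec H|^{2}=f-u/4$ from \eqref{lap1}. After setting $u=av$ everywhere, this yields a polynomial inequality in $a$, $v$, $f$, $|\vec H|^{2}$, $|\vec{\uppercase\expandafter{\romannumeral2}}|^{2}$ and the cross term $\vec H\cdot\vec{\uppercase\expandafter{\romannumeral2}}_{11}$. To eliminate the cross term, I would use Cauchy--Schwarz $\vec H\cdot\vec{\uppercase\expandafter{\romannumeral2}}_{11}\geq-|\vec H|\,|\vec{\uppercase\expandafter{\romannumeral2}}_{11}|\geq-|\vec H|\,|\vec{\uppercase\expandafter{\romannumeral2}}|$ (valid to apply since the prefactor $1-2/n$ is nonnegative for $n\geq 2$) followed by a weighted AM--GM, turning it into multiples of $|\vec H|^{2}$ and $|\vec{\uppercase\expandafter{\romannumeral2}}|^{2}$; substituting the remaining $|\vec H|^{2}=f-av/4$ then feeds back into the coefficients of $a^{2}v^{2}$ and $a^{2}v$. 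Finally, multiplying by the appropriate clearing factor (of order $n(n-1)$) and grouping by powers of $a$ puts the inequality into the claimed quadratic form; in particular, the $4n(f-n)^{2}$ constant comes from the $(\Delta^{\Sigma} f)^{2}/(n-1)$ contribution in Lemma~\ref{ineqf} after writing $\Delta^{\Sigma} f=2(n-f)+av/2$, and the $n(n+1)(n-\tilde{f}_{\max})\,a$ summand emerges once $(n-f)$ is unfolded as $(n-\tilde{f}_{\max})+v$.

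The main obstacle is the bookkeeping in this last step. The target expression mixes the variables $(n-f)$, $v=\tilde{f}_{\max}-f$ and $(n-\tilde{f}_{\max})$, which must be carefully disentangled after the substitutions above; and the weight in the AM--GM on $|\vec H|\,|\vec{\uppercase\expandafter{\romannumeral2}}|$ has to be tuned so that the eventual coefficient of $|\vec{\uppercase\expandafter{\romannumeral2}}|^{2}v$ matches exactly $-(n-1)^{2}$ rather than an off-by-one variant. A convenient sanity check is $n=2$: the prefactor $1-2/n$ then vanishes, the cross term disappears, and the stated identity is recovered directly from the Bochner substitution with no AM--GM required.
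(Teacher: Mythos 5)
Your framework is the right one and matches the paper: apply the second-derivative test to $g$ at an interior maximum $x_0$; from $\nabla^\Sigma g(x_0)=0$ read off $\langle\nabla^\Sigma u,\nabla^\Sigma f\rangle=-au$ and $|\nabla^\Sigma|\nabla^\Sigma f||^2=a^2/4$ (where $u=|\nabla^\Sigma f|^2$); observe that the two first-order pieces in $\Delta^\Sigma(u/v)$ cancel because $u=av$ at $x_0$; conclude that $\Delta^\Sigma g(x_0)\leq 0$ collapses to $\Delta^\Sigma u\leq -a\,\Delta^\Sigma f$; and plug the Bochner lower bound of Lemma \ref{ineqf} into this. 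All of this is correct and exactly what the paper does, and your bookkeeping remarks about where $4n(f-n)^2$ and the $n(n+1)(n-\tilde f_{\max})a$ summand come from are also accurate.

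The gap is your treatment of the cross term $\vec H\cdot\vec{\uppercase\expandafter{\romannumeral2}}_{11}$. You propose Cauchy--Schwarz $\vec H\cdot\vec{\uppercase\expandafter{\romannumeral2}}_{11}\geq-|\vec H|\,|\vec{\uppercase\expandafter{\romannumeral2}}|$ followed by AM--GM, but this is a catastrophically weak bound here and would not produce the lemma. The paper instead uses an \emph{exact} pointwise identity: in a frame with $e_1\parallel\nabla^\Sigma f$, a direct computation gives $f_{11}=2(1-\vec H\cdot\vec{\uppercase\expandafter{\romannumeral2}}_{11})$, and since $f_{11}=\langle\nabla^\Sigma|\nabla^\Sigma f|^2,\nabla^\Sigma f\rangle/(2|\nabla^\Sigma f|^2)$, one obtains
\[
\vec H\cdot\vec{\uppercase\expandafter{\romannumeral2}}_{11}=1-\frac{\langle\nabla^\Sigma|\nabla^\Sigma f|^2,\nabla^\Sigma f\rangle}{4|\nabla^\Sigma f|^2},
\]
which at the critical point evaluates to $\vec H\cdot\vec{\uppercase\expandafter{\romannumeral2}}_{11}(x_0)=1+a/4$, an exact positive quantity of size $O(a)$. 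Replacing this by $-|\vec H|\,|\vec{\uppercase\expandafter{\romannumeral2}}|$ discards all of that positivity and introduces $|\vec H|\,|\vec{\uppercase\expandafter{\romannumeral2}}|$--type terms that have no counterpart in the target. Concretely, without the $a/4$ contribution the coefficient of $a^2(\tilde f_{\max}-f)$ would include only $(n+1)-2(n-1)=3-n$, which is nonpositive for $n\geq 3$; the missing piece $\frac{(n-1)(n-2)}{n}$, which comes precisely from $\frac{n-2}{n}\cdot\frac{a}{4}\cdot av$, is what pushes the total up to $\frac{2}{n}>0$ and, after rescaling, to the claimed $\frac{1}{2}$. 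Similarly, the constant $1$ in $1+a/4$ is needed to build $2(n^2+1)$ in the linear coefficient. Finally, your remark about ``tuning AM--GM to hit $-(n-1)^2$'' is a misconception: the $-(n-1)^2|\vec{\uppercase\expandafter{\romannumeral2}}|^2$ coefficient is already fixed by the $-\frac{n-1}{n}|\vec{\uppercase\expandafter{\romannumeral2}}|^2$ Ricci term in Lemma \ref{ineqf}, not by any further estimate on $\vec H\cdot\vec{\uppercase\expandafter{\romannumeral2}}_{11}$. Your $n=2$ sanity check is consistent (the prefactor vanishes) but it does not rescue the argument for $n\geq 3$, which is exactly where the Cauchy--Schwarz approach fails.
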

	\begin{proof}
		First we note that
		\begin{align}
			\frac{\langle \nabla^{\Sigma} |\nabla^{\Sigma} f|^2, \nabla^{\Sigma} f \rangle}{|\nabla^{\Sigma} f|^2} & = 2f_{11} = 4 \langle \nabla^{\Sigma}_{e_{1}} X^{\top}, e_{1}  \rangle \notag \\ &  = 4 \langle \nabla^{R^{n+p}}_{e_{1}} X- \nabla^{R^{n+p}}_{e_{1}} X^{\perp} , e_{1}  \rangle  \notag \\ &   =  4( 1 - \vec{H} \cdot \vec{\uppercase\expandafter{\romannumeral2}}_{11}).  \notag 
		\end{align}
		Thus we have
		\begin{align}
			\vec{H} \cdot \vec{\uppercase\expandafter{\romannumeral2}}_{11} = - \frac{\langle \nabla^{\Sigma} |\nabla^{\Sigma} f|^2, \nabla^{\Sigma} f \rangle }{4|\nabla^{\Sigma} f |^2} + 1. \label{Hll1}
		\end{align}
		Now we compute
		\begin{align}
			\frac{1}{2} \Delta^{\Sigma} g = \frac{1}{2} \Delta^{\Sigma}( \frac{|\nabla^{\Sigma} f |^2}{\tilde{f}_{\max} - f} ) & = \frac{1}{2} \frac{\Delta^{\Sigma} |\nabla^{\Sigma} f |^2}{\tilde{f}_{\max} - f}  + \frac{\langle \nabla^{\Sigma} |\nabla^{\Sigma} f|^2, \nabla^{\Sigma} f \rangle}{(\tilde{f}_{\max} - f)^2} \notag \\ & + \frac{1}{2} \frac{1}{(\tilde{f}_{\max} -f)^2 }[\Delta^{\Sigma} f + 2 \frac{|\nabla^{\Sigma} f |^2}{\tilde{f}_{\max} -f}] | \nabla^{\Sigma} f|^2  \notag \\ & = \frac{1}{2} \frac{\Delta^{\Sigma} |\nabla^{\Sigma} f |^2}{\tilde{f}_{\max} - f}  + \frac{\langle \nabla^{\Sigma} |\nabla^{\Sigma} f|^2, \nabla^{\Sigma} f \rangle}{(\tilde{f}_{\max} - f)^2} \notag \\ & + \frac{1}{2} \frac{1}{(\tilde{f}_{\max} -f)^2 }[2(n-f) + \frac{1}{2} | \nabla^{\Sigma} f|^2 + 2 \frac{|\nabla^{\Sigma} f |^2}{\tilde{f}_{\max} -f}] |\nabla^{\Sigma} f|^2.  \label{lapg}
		\end{align}
		At the maximum point $x_{0}$, we have
		\begin{align}
			\nabla^{\Sigma} g (x_{0}) = 0, \  \ \Delta^{\Sigma} g(x_{0}) \le 0. \notag 
		\end{align}
		Then we have 
		\begin{align}
			|\nabla^{\Sigma} f|^2(x_{0}) = \frac{|\nabla^{\Sigma} f |^2}{\tilde{f}_{\max} -f} \cdot (\tilde{f}_{\max} -f) (x_{0}) = a(\tilde{f}_{\max} -f) (x_{0}) 
		\end{align}
		and
       \begin{align}
			|\vec{H}|^2(x_{0}) = f(x_{0}) - \frac{| \nabla^{\Sigma} f|^2 }{4}(x_{0}) = f(x_{0}) - \frac{a(\tilde{f}_{\max} - f)(x_{0})}{4},
		\end{align}
        
        From the fact that $\nabla^{\Sigma} g (x_{0}) = 0$, we have
		\begin{align}
			\nabla^{\Sigma}  |\nabla^{\Sigma} f |^2 (x_{0}) = -\frac{\nabla^{\Sigma} f}{\tilde{f}_{\max} -f}  |\nabla^{\Sigma} f |^2 (x_{0}),
		\end{align}
		which implies
		
		\begin{align}
			| \nabla^{\Sigma} |\nabla^{\Sigma} f||^2(x_{0}) = \frac{|\nabla^{\Sigma} f |^4 }{4(\tilde{f}_{\max} -f)^2} (x_{0}) = \frac{a^2}{4},
		\end{align}
		and
		\begin{align}
			\langle \nabla^{\Sigma} |\nabla^{\Sigma} f|^2, \nabla^{\Sigma} f \rangle (x_{0}) = -\frac{| \nabla^{\Sigma} f |^4 }{\tilde{f}_{\max} - f} (x_{0}) = -a^2 (\tilde{f}_{\max} - f) (x_{0}).
		\end{align}
	 Putting  these equations above into $\eqref{lapg}$ and by $\eqref{Hll1}$,Lemma \ref{ineqf}, then  we have the inequality $\eqref{eqquada}.$ 
		
        Hence we prove this  lemma. 
	\end{proof}

Now, we prove Theorem \ref{maingradest}.

\begin{proof}[Proof of Theorem \ref{maingradest}]

First, we note that the following inequality:
\begin{align}
\frac{1}{4} a^2+4(f-n)^2 \geq 2 \mid f-n \mid a.\notag
\end{align}
At the maximum point of $g$, we put it into Lemma \ref{ineqquad} to obtain
\begin{align}
0  \geq &   [\frac{1}{4}(\tilde{f}_{\max} -f)^2 + \frac{1}{2} (\tilde{f}_{\max} -f) + \frac{n(n-1)}{4}  ] a \notag \\ +&  (n+1)(\tilde{f}_{\max} -f)^2 + n(n+1)(n - \tilde{f}_{\max}) +2  n \mid f-n \mid \notag \\
+ & [2(n^2+1) -(n+1)\tilde{f}_{\max} -  (n-1)^2 |\vec{\uppercase\expandafter{\romannumeral2}}|_{\max} ^2 ](\tilde{f}_{\max} -f).\label{first ineq}
\end{align}
$1^{\circ}$ If $f\geq n$ at this point, then by \eqref{first ineq}, we have
\begin{align}
0  \geq &   [\frac{1}{4}(\tilde{f}_{\max} -f)^2 + \frac{1}{2} (\tilde{f}_{\max} -f) + \frac{n(n-1)}{4}  ] a \notag \\ +&  (n+1)(\tilde{f}_{\max} -f)^2 + n(n-1 )(n - \tilde{f}_{\max}) \notag \\
+ & [2(n^2-n+1) -(n+1)\tilde{f}_{\max} -  (n-1)^2 |\vec{\uppercase\expandafter{\romannumeral2}}|_{\max} ^2 ](\tilde{f}_{\max} -f)\notag \\
>& [\frac{n(n-1)}{4}  +\frac{1}{2} (\tilde{f}_{\max} -f)]a+ n(n-1 )(n - \tilde{f}_{\max})  \notag \\
+ & [2(n^2- n+1) -(n+1)\tilde{f}_{\max} -  (n-1)^2 |\vec{\uppercase\expandafter{\romannumeral2}}|_{\max} ^2 ](\tilde{f}_{\max} -f),\notag 
\end{align}
that is, 
\begin{align}
 a<&\frac{[-2(n^2- n+1) +(n+1)\tilde{f}_{\max} +(n-1)^2 |\vec{\uppercase\expandafter{\romannumeral2}}|_{\max} ^2](\tilde{f}_{\max} -f) +n(n-1 )( \tilde{f}_{\max}-n)}{\frac{n(n-1)}{4}  +\frac{1}{2} (\tilde{f}_{\max} -f)}\notag \\
 \leq &4(\tilde{f}_{\max}-n)\max\{1,\frac{-n^2+n-2+(n+1)\tilde{f}_{\max} +(n-1)^2 |\vec{\uppercase\expandafter{\romannumeral2}}|_{\max} ^2}{n(n- 1)  +2(\tilde{f}_{\max} -n)} \}\notag \\
 =& 4(\tilde{f}_{\max}-n) \frac{-n^2+n-2+(n+1)\tilde{f}_{\max} +(n-1)^2 |\vec{\uppercase\expandafter{\romannumeral2}}|_{\max} ^2}{n(n-3)  +2\tilde{f}_{\max}}.\label{frist result}
 \end{align}
$2^{\circ}$ If $f< n $ at this point, then by \eqref{first ineq}, we have 
\begin{align}
0  \geq &   [\frac{1}{4}(\tilde{f}_{\max} -f)^2 + \frac{1}{2} (\tilde{f}_{\max} -f) + \frac{n(n-1)}{4}  ] a \notag \\ +&  (n+1)(\tilde{f}_{\max} -f)^2 + n(n+3)(n - \tilde{f}_{\max}) \notag \\
+ & [2(n^2+ n+1) -(n+1)\tilde{f}_{\max} -  (n-1)^2 |\vec{\uppercase\expandafter{\romannumeral2}}|_{\max} ^2 ](\tilde{f}_{\max} -f) \notag \\
>& [\frac{1}{4}(\tilde{f}_{\max} -n)^2 + \frac{1}{2} (\tilde{f}_{\max} -n) + \frac{n(n-1)}{4}  ] a \notag \\ +&  (n+1)(\tilde{f}_{\max} -f)^2 + n(n+3 )(n - \tilde{f}_{\max}) \notag \\
+ & [2(n^2+n+1) -(n+1)\tilde{f}_{\max} -  (n-1)^2 |\vec{\uppercase\expandafter{\romannumeral2}}|_{\max} ^2 ](\tilde{f}_{\max} -f), \notag 
\end{align}
that is,
\begin{align}
a< &\frac{[-2(n^2+ n+1) +(n+1)\tilde{f}_{\max} +(n-1)^2 |\vec{\uppercase\expandafter{\romannumeral2}}|_{\max} ^2 ](\tilde{f}_{\max} -f)}{\frac{1}{4}(\tilde{f}_{\max} -n)^2 + \frac{1}{2} (\tilde{f}_{\max} -n) + \frac{n(n-1)}{4}}
\notag \\
+&\frac{-(n+1)(\tilde{f}_{\max} -f)^2+n(n+3 )(\tilde{f}_{\max}-n)}{\frac{1}{4}(\tilde{f}_{\max} -n)^2 + \frac{1}{2} (\tilde{f}_{\max} -n) + \frac{n(n-1)}{4}}.\notag
\end{align}
We  find  that when 
$$\frac{-2(n^2+ n+1) +(n+1)\tilde{f}_{\max} +(n-1)^2 |\vec{\uppercase\expandafter{\romannumeral2}}|_{\max} ^2}{ 2(n+1)   }\leq \tilde{f}_{\max}-n,$$
i.e.  
$$(n-1)^2|\vec{\uppercase\expandafter{\romannumeral2}}|_{\max} ^2 \leq (n+1)\tilde{f}_{\max} +2,$$
we can obtain
\begin{align}
a< &\frac{[-2(n^2+ n+1) +(n+1)\tilde{f}_{\max} +(n-1)^2 |\vec{\uppercase\expandafter{\romannumeral2}}|_{\max} ^2 ](\tilde{f}_{\max} -n)}{\frac{1}{4}(\tilde{f}_{\max} -n)^2 + \frac{1}{2} (\tilde{f}_{\max} -n) + \frac{n(n-1)}{4}}
\notag \\
+&\frac{-(n+1)(\tilde{f}_{\max} -n)^2+n(n+3)(\tilde{f}_{\max}-n)}{\frac{1}{4}(\tilde{f}_{\max} -n)^2 + \frac{1}{2} (\tilde{f}_{\max} -n) + \frac{n(n-1)}{4}}\notag \\
=& 4(n-1)(\tilde{f}_{\max}-n)\frac{(n-1)(|\vec{\uppercase\expandafter{\romannumeral2}}|_{\max} ^2-1)+(n+1)}{(\tilde{f}_{\max} -n)^2 + 2 (\tilde{f}_{\max} -n) + n(n-1)}\notag \\
<& 4(\tilde{f}_{\max}-n) \frac{-n^2+n-2+(n+1)\tilde{f}_{\max} +(n-1)^2 |\vec{\uppercase\expandafter{\romannumeral2}}|_{\max} ^2}{n(n-3)  +2\tilde{f}_{\max} }.\label{two result}
\end{align}
Considering the above two cases and combing \eqref{frist result} with \eqref{two result}, we can conclude that  when 
$$(n-1)^2|\vec{\uppercase\expandafter{\romannumeral2}}|_{\max} ^2 \leq (n+1)f_{\max} +2,$$
\begin{align}
 a<4(\tilde{f}_{\max}-n) \frac{-n^2+n-2+(n+1)\tilde{f}_{\max} +(n-1)^2 |\vec{\uppercase\expandafter{\romannumeral2}}|_{\max} ^2}{n(n-3)  +2\tilde{f}_{\max} }.\notag
\end{align}
Let $\delta \to 1$, we immediately see that   for any point in $\Sigma$,
\begin{align}
|\nabla^{\Sigma} f|^2 \leq 4(f_{\max}-n) \frac{-n^2+n-2+(n+1)f_{\max} +(n-1)^2 |\vec{\uppercase\expandafter{\romannumeral2}}|_{\max} ^2}{n(n-3)  +2f_{\max} } (f_{\max}-f).\label{final result}
\end{align}
This completes the proof of Theorem \ref{maingradest}.

\end{proof}

	\section{Proof of Theorem \ref{main} } \label{mainproof}
	Now, we prove Theorem \ref{main}.  
 
 \begin{proof}
    
      First, we assume that  the squared norm of the second fundamental satisfies the assumption \eqref{hypothesis} of Theorem \ref{maingradest}.
      
      We  choose a point $p$ such that $|X|^2_{\max} = |X|^2(p) $ and  choose  $q_0$  such that  $d^{\Sigma}(p,q_0) = r_0=\max\limits_{q\in \Sigma} d^{\Sigma}(p,q)$.
      
      Then we can choose a minimizing geodesic $\gamma$  with   unit speed such that $\gamma(0)=p,\gamma(r_0)=q_0$.

      Hence when  we restrict   $|X|^2 $ and $|\vec{H}|^{2}$ on $\gamma (t) $ and by Theorem \ref{maingradest}, 
		  for any $0\leq t\leq r_0$, we have
        \begin{align}
			 |X|^2_{\max} -|X|^2 \leq &B^2(| X|^2_{\max}-n) t^2, \label{|X|}  \\
             |\vec{H}|^{2} = |X|^2 - |X^\top|^2 \geq &|X|^2_{\max}-[B^2(| X|^2_{\max}-n)+1]( |X|^2_{\max} -|X|^2) \notag \\
             \geq & |X|^2_{\max}-B^2[B^2(| X|^2_{\max}-n)+1](| X|^2_{\max}-n) t^2.\label{|H|}
             \end{align}
		If we choose $e_1=\gamma'$ and $e_2,\cdots ,e_n $ such that $e_2,\cdots ,e_n \perp \gamma'$  and are parallel along $\gamma$, then we  have same conclusion as \eqref{ricest2}:
\begin{align}
			Ric^{\Sigma}(\gamma',\gamma')  \geq  \frac{n-2}{n}   \vec{H} \cdot \vec{\uppercase\expandafter{\romannumeral2}}_{11} + \frac{|\vec{H}|^2}{n} - \frac{n-1}{n} | \vec{\uppercase\expandafter{\romannumeral2}} |^2.\label{Ricci}
            \end{align}
And for any $0 <l \leq r_0$, we  let $$V_{j}(t) = \sin(\frac{\pi}{l} t)e_{j}(t), \qquad j = 2,\cdots,n. $$ It is clear that $V_{j}$ generates a proper variation of $\gamma$ in $[0,l]$, whose energy is  denoted by $E_{j}$.  
     
      Then by the second variational formula of the energy and \eqref{Ricci},  for any $0<l \leq r_0$,
        \begin{align}
           0 \le  \frac{1}{2} \sum_{j=1}^{n}  E^{\prime \prime}_{j}(0) =&\frac{(n-1)\pi^2}{l^2} \int_{0}^{l} \cos^2 (\frac{\pi}{l}t) dt - \int_{0}^{l} \sin^2(\frac{\pi}{l}t) Ric^{\Sigma}(\gamma',\gamma')dt \notag \\
           \leq &\frac{(n-1)^2 \pi^2 }{2l}-\int_{0}^{l} \sin^2(\frac{\pi}{l}t) (\frac{n-2}{n}   \vec{H} \cdot \vec{\uppercase\expandafter{\romannumeral2}}_{11} + \frac{|\vec{H}|^2}{n} - \frac{n-1}{n} | \vec{\uppercase\expandafter{\romannumeral2}} |^2) dt,
           \label{variation 1} 
        \end{align}
where we use the equality
$$\int_{0}^{l} \sin^2(\frac{\pi}{l}t)dt=\int_{0}^{l} \cos^2(\frac{\pi}{l}t)dt=\frac{l}{2}.$$

Then we plug the equality 
 $$\frac{1}{2}(|X|^2_{\max}-|X|^2)''=\vec{H} \cdot \vec{\uppercase\expandafter{\romannumeral2}}_{11}-1$$
 and the inequality\eqref{|H|}into \eqref{variation 1} to obtain:
 \begin{align}
           0\leq & \frac{(n-1)^2 \pi^2 }{2l}-\int_{0}^{l} \sin^2(\frac{\pi}{l}t) \{\frac{n-2}{n} (\frac{1}{2}(|X|^2_{\max}-|X|^2)''+1) +\frac{1}{n}|X|^2_{\max} \notag \\
           -&\frac{1}{n}[B^2(| X|^2_{\max}-n)+1]( |X|^2_{\max} -|X|^2) - \frac{n-1}{n} | \vec{\uppercase\expandafter{\romannumeral2}} |^2_{\max}\} dt\notag\\
           =&\frac{l}{2}[\frac{(n-1)\pi^2}{l^2}-\frac{n-2}{n }-\frac{|X|^2_{\max}}{n}+\frac{n-1}{n}|\vec{\uppercase\expandafter{\romannumeral2}} |^2_{\max}]\notag \\
           -&\int_{0}^{l}\{\frac{(n-2)\pi^2}{n l^2}\cos(\frac{2\pi}{l}t)-\frac{1}{n}[B^2(| X|^2_{\max}-n)+1]\sin^2(\frac{\pi}{l}t)\}(|X|^2_{\max}-|X|^2)dt\notag \\
           =&\frac{l}{2}[\frac{(n-1)\pi^2}{l^2}-\frac{n-2}{n }-\frac{|X|^2_{\max}}{n}+\frac{n-1}{n}|\vec{\uppercase\expandafter{\romannumeral2}} |^2_{\max}] +\int_{0}^{l} \{\frac{1}{2n}[B^2(| X|^2_{\max}-n)+1]\notag \\       &-[\frac{(n-2)\pi^2}{n l^2}+\frac{1}{2n}[B^2(| X|^2_{\max}-n)+1]] \notag\\
           &\cos(\frac{2\pi}{l}t)\}(|X|^2_{\max}-|X|^2)dt.\label{varation2} 
           \end{align}
We can see that  when $l_0 \leq t\leq l-l_0$,
$$\frac{1}{2n}[B^2(| X|^2_{\max}-n)+1]-[\frac{(n-2)\pi^2}{n l^2}+\frac{1}{2n}[B^2(| X|^2_{\max}-n)+1]] \cos(\frac{2\pi}{l}t) \geq 0,$$
where $0<l_0<\frac{l}{4}$ and 
$$\cos(\frac{2\pi}{l}l_0)=\frac{B^2(| X|^2_{\max}-n)+1}{B^2(| X|^2_{\max}-n)+1+ \frac{2(n-2)\pi^2}{l^2}}.$$
We also note that 
\begin{align}
\int_{s}^{l-s} t^2\cos (\frac{2\pi}{l}t) dt=\frac{l}{2\pi}\sin( \frac{2\pi}{l}s)[ \frac{l^2}{\pi^2}-s^2-(l-s)^2]+\frac{l^2}{2\pi^2} \cos (\frac{2\pi}{l}s)(l-2s),\notag 
\end{align}
and 
\begin{align}
    \frac{d}{ds}\int_{s}^{l-s} t^2\cos (\frac{2\pi}{l}t) dt<0, \forall 0<s<\frac{l}{4}.\notag
    \end{align}
That is,
\begin{align}
\int_{s}^{l-s} t^2\cos (\frac{2\pi}{l}t) dt >\int_{\frac{3l}{4}}^{\frac{l}{4}} t^2\cos (\frac{2\pi}{l}t) dt=-\frac{l^3}{2\pi}( -\frac{1}{\pi^2}+\frac{1}{16}+\frac{9}{16}), \forall 0<s<\frac{l}{4}.\notag
\end{align}
We substitute the above results and  the inequality \eqref{|X|} into \eqref{varation2} and multiply both sides of \eqref{varation2}  by $\frac{2}{l}$ to obtain 
\begin{align}
    0< &\frac{(n-1)\pi^2}{l^2}-\frac{n-2}{n }-\frac{|X|^2_{\max}}{n}+\frac{n-1}{n}|\vec{\uppercase\expandafter{\romannumeral2}} |^2_{\max}\notag\\
    +&B^2(|X|^2_{\max}-n)l^2\{\frac{1}{3n}(B^2(| X|^2_{\max}-n)+1) \notag\\
    +&\frac{1}{\pi}( -\frac{1}{\pi^2}+\frac{1}{16}+\frac{9}{16}) 
    [\frac{(n-2)\pi^2}{n l^2}+\frac{1}{2n}(B^2(| X|^2_{\max}-n)+1)]\} \notag\\
   =&\frac{(n-1)\pi^2}{l^2}+(\frac{1}{3}+\frac{5}{16\pi}-\frac{1}{2\pi^3})\frac{1}{n}B^2(|X|^2_{\max}-n)[B^2(| X|^2_{\max}-n)+1]l^2  \notag\\
   +& (\frac{5\pi}{8}-\frac{1}{\pi})\frac{n-2}{n}B^2(|X|^2_{\max}-n)-\frac{n-2}{n }-\frac{|X|^2_{\max}}{n}+\frac{n-1}{n}|\vec{\uppercase\expandafter{\romannumeral2}} |^2_{\max}.\label{variation3}
   \end{align}

Here let's consider the the lower  bound of $ |\vec{H}|^{2}$ and the upper bound of $|\vec{\uppercase\expandafter{\romannumeral2}} |^2-\frac{ |\vec{H}|^{2}}{n}$ first. 

For any $q\in \Sigma$, by \eqref{|H|}, we have 
 \begin{align}
|\vec{H}|^{2} \geq & |X|^2_{\max}-B^2[B^2(| X|^2_{\max}-n)+1](| X|^2_{\max}-n) r_0^2,\notag\\
|\vec{\uppercase\expandafter{\romannumeral2}} |^2-\frac{ |\vec{H}|^{2}}{n} \leq & |\vec{\uppercase\expandafter{\romannumeral2}} |^2_{\max}-\frac{|X|^2_{\max}}{n}+B^2[B^2(| X|^2_{\max}-n)+1](\frac{| X|^2_{\max}}{n}-1) r_0^2.\label{global}
\end{align}
 
 Since if the codimension is 1 and $|\vec{H}|^{2} >0$ for any point, then by 
Huisken's rigidity theorm on mean convex self-shrinkers\cite{huisken1990asymptotic}, we immediately have $X(\Sigma^{n}) = S^{n}(\sqrt{n})$.

If the codiemsion is more than 1 and  $``|\vec{H}|^{2} >0,|\vec{\uppercase\expandafter{\romannumeral2}} |^2-\frac{ |\vec{H}|^{2}}{n}\leq \frac{1}{2}^"$, then by Theorem 1.6 of  \cite{cao2014pinching},  we immediately have $X(\Sigma^{n}) = S^{n}(\sqrt{n})$. 

 So in the following step, to complete the proof of the main theorem, we need to restrict $|\vec{\uppercase\expandafter{\romannumeral2}} |^2_{\max}$ to ensure it.

From $\eqref{global}$, we can see that if 
\begin{align}
B^2[B^2(| X|^2_{\max}-n)+1](| X|^2_{\max}-n) r_0^2 <&\min\{|X|^2_{\max},\frac{n}{2}-(n|\vec{\uppercase\expandafter{\romannumeral2}} |_{\max}^2-|X|_{\max}^{2})\}\notag\\
=&\frac{n}{2}-(n|\vec{\uppercase\expandafter{\romannumeral2}} |_{\max}^2-|X|_{\max}^{2}),\label{condition}
\end{align}
the above conditions on $``|\vec{H}|^{2},|\vec{\uppercase\expandafter{\romannumeral2}} |^2-\frac{ |\vec{H}|^{2}}{n} \;"$ are satisfied.

To derive \eqref{condition}, combining  \eqref{variation3}, \eqref{condition} and the assumption\eqref{hypothesis}, and noting that \eqref{variation3} holds for any $0<l\leq r_0$, it suffices to ensure
\begin{align}
|\vec{\uppercase\expandafter{\romannumeral2}} |_{\max}^2 <\min\{\frac{|X|_{\max}^{2}}{n}+\frac{1}{2},\frac{n+1}{(n-1)^2}|X|_{\max}^{2}+\frac{2}{(n-1)^2}\} \label{condition''}
\end{align}
and 
\begin{align}
0 \geq & B^2[B^2(| X|^2_{\max}-n)+1](| X|^2_{\max}-n) \frac{(n-1)\pi^2}{\frac{n}{2}-(n|\vec{\uppercase\expandafter{\romannumeral2}} |_{\max}^2-|X|_{\max}^{2})}\notag\\
+& (\frac{1}{3}+\frac{5}{16\pi}-\frac{1}{2\pi^3})(\frac{1}{2}+\frac{|X|_{\max}^{2}}{n}-|\vec{\uppercase\expandafter{\romannumeral2}} |_{\max}^2 )+(\frac{5\pi}{8}-\frac{1}{\pi})\frac{n-2}{n}B^2(|X|^2_{\max}-n)\notag\\
-&\frac{n-2}{n }-\frac{|X|^2_{\max}}{n}+\frac{n-1}{n}|\vec{\uppercase\expandafter{\romannumeral2}} |^2_{\max}\notag\\
=&B^2[B^2(| X|^2_{\max}-n)+1](| X|^2_{\max}-n) \frac{(n-1)\pi^2}{\frac{n}{2}-n(|\vec{\uppercase\expandafter{\romannumeral2}} |_{\max}^2-1)+(|X|_{\max}^{2}-n)}\notag\\
+&[(-\frac{2}{3}+\frac{5}{16\pi} -\frac{1}{2\pi^3})\frac{1}{n}+(\frac{5\pi}{8}-\frac{1}{\pi})\frac{n-2}{n}B^2](|X|^2_{\max}-n)-\frac{5}{6}+\frac{5}{32\pi}-\frac{1}{4\pi^3}+\frac{1}{n}\notag\\
+&(\frac{2}{3}-\frac{5}{16\pi}+\frac{1}{2\pi^3}-\frac{1}{n})(|\vec{\uppercase\expandafter{\romannumeral2}} |^2_{\max}-1).\label{condition'}
\end{align}
Since 
\begin{align} 0 \leq |X|^2_{\max}-n =|\vec{H}|^2_{\max}-n \leq n (|\vec{\uppercase\expandafter{\romannumeral2}} |_{\max}^2-1) \label{basic}\end{align}
by Lemma \ref{meanmax},
and by the definition of $B:$ $$B^2=\frac{-n^2+n-2+(n+1)|X|^2_{\max} +(n-1)^2 |\vec{\uppercase\expandafter{\romannumeral2}}|_{\max}^2}{n(n-3)  +2|X|^2_{\max} } >\frac{n+1}{n},$$
to ensure the inequalities \eqref{condition''} and \eqref{condition'}, it suffices to ensure 
\begin{align}
   |\vec{\uppercase\expandafter{\romannumeral2}} |_{\max}^2 -1<&\min\{ \frac{1}{2},\frac{|X|_{\max}^{2}}{n}+\frac{1}{2},\frac{n+1}{(n-1)^2}|X|_{\max}^{2}+\frac{2}{(n-1)^2}-1\} \notag \\
   =&\min \{\frac{1}{2},\frac{n+1}{(n-1)^2}|X|_{\max}^{2}+\frac{2}{(n-1)^2}-1\}
   \label{condition'''}
   \end{align}
and 
\begin{align}
0\geq&(|\vec{\uppercase\expandafter{\romannumeral2}} |_{\max}^2 -1)\{2(n-1)\pi^2\tilde{B}^2[n\tilde{B}^2(|\vec{\uppercase\expandafter{\romannumeral2}} |_{\max}^2 -1)+1]\notag\\
+& (\frac{5\pi}{8}-\frac{1}{\pi})(n-2)\tilde{B}^2-\frac{1}{n}\}-\frac{5}{6}+\frac{5}{32\pi}-\frac{1}{4\pi^3}+\frac{1}{n} \notag\\
\label{condition''''}
\end{align}
where under the inequality \eqref{basic},
$$\tilde{B}^2=\frac{(2n^2-n+1)(|\vec{\uppercase\expandafter{\romannumeral2}} |_{\max}^2 -1)+n^2-1}{n[n-1+2(|\vec{\uppercase\expandafter{\romannumeral2}} |_{\max}^2 -1) ]} \geq B^2.$$ 
In fact, we can find   that the function \eqref{condition'} (fix $(|\vec{\uppercase\expandafter{\romannumeral2}} |_{\max}^2 -1)$) and   replace $B^2$ by $\tilde{B}^2$) must increase or decrease first and then increase on $(|X|_{\max}^{2}-n )$ by simple differentiation, or we can observe that this function is actually the type :$\alpha (x+\beta) + \frac{\Gamma_1}{ x+\beta}+\Gamma_2 (\alpha,\beta>0)$, so it attains  its maximum at the endpoint value. And we can clearly see that  the function attains a greater value at $n(|\vec{\uppercase\expandafter{\romannumeral2}} |_{\max}^2 -1)$ than at $0$. Thus we substitute   $n(|\vec{\uppercase\expandafter{\romannumeral2}} |_{\max}^2 -1)$ into  the function \eqref{condition'} to obtain  the condition \eqref{condition''''}.

We solve the quadratic inequality \eqref{condition''''} on $\tilde{B}^2  (|\vec{\uppercase\expandafter{\romannumeral2}} |_{\max}^2 -1)$ to derive
\begin{align}
    \tilde{B}^2(|\vec{\uppercase\expandafter{\romannumeral2}} |_{\max}^2 -1) &\leq \frac{c+\frac{1}{n}(|\vec{\uppercase\expandafter{\romannumeral2}} |_{\max}^2 -1)}{\pi[b+\sqrt{b^2+2n(n-1) c +2(n-1)(|\vec{\uppercase\expandafter{\romannumeral2}} |_{\max}^2 -1)}]}.\label{final inequality} \\
    &\left(> \frac{0.8-\frac{1}{n}+\frac{1}{n}(|\vec{\uppercase\expandafter{\romannumeral2}} |_{\max}^2 -1)}{\pi [ 3.5n-3.6+\sqrt{11n^2-24.8n+14.5+2(n-1)(|\vec{\uppercase\expandafter{\romannumeral2}} |_{\max}^2 -1)}  ]   }\notag \right.\\
 &\left. > \frac{0.8-\frac{1}{n}+\frac{1}{n}(|\vec{\uppercase\expandafter{\romannumeral2}} |_{\max}^2 -1 )}{\pi      [ 3.5n-3.6+ 3.4 (n-1) + \sqrt{2(n-1)(|\vec{\uppercase\expandafter{\romannumeral2} }|_{\max}^2 -1)}] }\notag \right.\\
  &\left.> \frac{0.8-\frac{1}{n} }{\pi\sqrt{n-1}    ( 6.9 \sqrt{n-1} + \sqrt{2(|\vec{\uppercase\expandafter{\romannumeral2} }|_{\max}^2 -1)})}.\notag
\right)
\end{align}

where $$b=(n-1)\pi +(\frac{5}{16}-\frac{1}{2\pi^2})(n-2),c=\frac{5}{6}-\frac{5}{32\pi}+\frac{1}{4\pi^3}-\frac{1}{n}.$$
We can find that when 
$$|\vec{\uppercase\expandafter{\romannumeral2}} |_{\max}^2 -1\leq \frac{1}{10\pi (n+2)},$$
 we have 
 \begin{align}
\frac{0.8-\frac{1}{n} }{\pi\sqrt{n-1}    ( 6.9 \sqrt{n-1} + \sqrt{2(|\vec{\uppercase\expandafter{\romannumeral2} }|_{\max}^2 -1)}) } \geq  & \frac{0.8-\frac{1}{n} }{\pi\sqrt{n-1}    [ 6.9 \sqrt{n-1} + \frac{1}{\sqrt{5\pi (n+2)}}] } \notag \\
>& \frac{0.8n-1 }{  7.1 \pi n(n-1) }\notag
\end{align}
and 
\begin{align}
 \tilde{B}^2(|\vec{\uppercase\expandafter{\romannumeral2}} |_{\max}^2 -1) <&
 \frac{1}{10\pi(n+2)} \times \frac{[(2n^2-n+1)\times \frac{1}{10\pi(n+2)} +n^2-1]}{n(n-1)}\notag\\
 <&\frac{1}{10\pi(n+2)} \times \frac{\frac{n-1}{5\pi}+ n^2-1}{n(n-1)}\notag\\
 =&\frac{n+1+\frac{1}{5\pi}}{10\pi n(n+2)  }.\notag
 \end{align}
By direct comparison, we can clearly  see that 
$$\frac{n+1+\frac{1}{5\pi}}{10\pi n(n+2)  }<\frac{0.8n-1 }{  7.1 \pi n(n-1) },\forall n \geq 2.$$
So the above inequality \eqref{final inequality} holds, that is, the conditions  \eqref{condition''''} also holds. And we can observe that at this time, the condition \eqref{condition'''} clearly holds under this pinching condition on the squared norm of  the second fundamental form.

This completes the   proof of the main theorem $\ref{main}$.
\end{proof}

 \vspace{1cm}

 \textbf{\Large Declarations }

\vspace{0.2cm}
\textbf{Conflict and interest:} On behalf of all authors,the corresponding author states 

that there is no conflict of interest.

	\bibliographystyle{siam}
	\bibliography{ref}
 \end{document}